\documentclass [11pt]{amsart}
\usepackage[top=1.2in, bottom=1.2in, left=1in, right=1in]{geometry}
\usepackage{xcolor}
\usepackage{amssymb,amsmath,amsthm,amsfonts}
\usepackage{enumerate}
\usepackage{setspace}
\usepackage{empheq}
\usepackage[all]{xy}
\usepackage{verbatim}
\usepackage[normalem]{ulem}
\usepackage{todonotes}
\usepackage[bookmarks,colorlinks,breaklinks]{hyperref}  
\hypersetup{linkcolor=blue,citecolor=blue,filecolor=blue,urlcolor=blue} 
\usepackage{graphicx}
\usepackage{float}

\numberwithin{equation}{section}
\numberwithin{figure}{section}

\newtheorem{theorem}{Theorem}[section]
\newtheorem{proposition}[theorem]{Proposition}
\newtheorem{lemma}[theorem]{Lemma}
\newtheorem{remark}[theorem]{Remark}


\newtheorem{question}[theorem]{Question}

\theoremstyle{definition}
\newtheorem{definition}[theorem]{Definition}
\newtheorem{example}[theorem]{Example}

\makeatletter

\newcommand{\Rmnum}[1]{\expandafter\@slowromancap\romannumeral #1@}
\makeatother


\newcommand{\MCG}{\text{MCG}}

\newcommand*{\defeq}{\mathrel{\vcenter{\baselineskip0.5ex \lineskiplimit0pt
		\hbox{\scriptsize.}\hbox{\scriptsize.}}}%
=}


\begin{document}
\title{Relative train tracks and generalized endperiodic graph maps}

\author{Yan Mary He}
\address{Department of Mathematics, University of Oklahoma, Norman, OK 73019}
\email{he@ou.edu}

\author{Chenxi Wu}
\address{Department of Mathematics, University of Wisconsin-Madison, Madison,
WI 53703}
\email{cwu367@wisc.edu}

\date{\today\\2010 Mathematics Subject Classification: 37E25\\Key Words: endperiodic graph maps, train tracks, entropy}

\begin{abstract}
Motivated by the work of Cantwell-Conlon-Fenley on endperiodic homeomorphisms of infinite type surfaces,
we define and study endperiodic and generalized endperiodic maps of an infinite graph with finitely many ends. Adapting the work of Bestvina-Handel to the infinite type setting, we define endperiodic relative train track maps.
We prove that any generalized endperiodic map is homotopic to a generalized endperiodic relative train track map, via a combinatorially bounded homotopy equivalence. We show that the (largest) Perron-Frobenius eigenvalue of a relative train track representation of a generalized endperiodic map $f$ is a canonical quantity associated to $f$ as it admits a canonical group theoretic interpretation.
Moreover, the (largest) Perron-Frobenius eigenvalue and the topological entropy of a relative train track map is the smallest among its proper homotopy equivalence class.

\end{abstract}

\maketitle

\section{Introduction}

\subsection{Motivation and background}
Let $G$ be a connected and locally finite graph. A graph map $f \colon G \to G$ is a homotopy equivalence. If $G$ is a finite graph (i.e., $G$ has finitely many vertices and edges), graph maps of $G$ are extensively studied as such a map can be viewed as a topological representative of an outer automorphism of the free group $\pi_1(G)$.
The outer automorphism group ${\rm Out}(F_n)$ of a rank $n\ge 2$ free group $F_n$ is a counterpart in real dimension one of the mapping class group $\MCG(S)$ of a compact surface $S$, as $\MCG(S) = {\rm Out}(\pi_1(S))$ by the Dehn-Nielsen-Baer theorem. Moreover, ${\rm Out}(F_n)$ has been proven to share many similar properties as the mapping class group of a compact surface; we refer the reader to the survey paper \cite{Bestvina02}.

If $G$ is an infinite graph, the structure of a graph map is more complicated and much less is known. However, in real dimension two, the study of homeomorphisms of infinite type surfaces 
and big mapping class groups has become an active field of research. In a blogpost \cite{Calegari}, Calegari proposed the study of the mapping class group ${\rm MCG}(\mathbb R^2 \setminus \mathcal C)$ of the plane minus a Cantor set $\mathcal C$. More specifically, he posed the question of whether the space of quasimorphisms of this group is infinite-dimensional, which is the case for mapping class groups of finite type surfaces \cite{BF02}. To attack the problem, Calegari defined the {\it ray graph} of the plane minus the Cantor set, which is an analogue of the curve complex for finite type surfaces. Bavard proved in her thesis \cite{Bavard16} that the ray graph has infinite diameter, the action of ${\rm MCG}(\mathbb R^2 \setminus \mathcal C)$ on the ray graph is hyperbolic and the  space of quasimorphisms of ${\rm MCG}(\mathbb R^2 \setminus \mathcal C)$ is infinite dimensional. Ever since, the field has seen rapid development. We refer the interested reader to the survey paper \cite{AV20} for recent results and development of mapping class groups of infinite type surfaces.

More recently, in \cite{CCF21}, Cantwell, Conlon and Fenley introduced and studied a class of homeomorphisms of infinite type surfaces called {\it endperiodic maps}. If $S$ is a compact surface with negative Euler characteristic, the Nielsen-Thurston theory classifies the isotopy class of a homeomorphism $f$ of $S$; see \cite{CassonBleiler88,FarbMargalit12}. For endperiodic homeomorphisms of infinite type surfaces with finitely many ends, Cantwell, Conlon and Fenley systematically developed an analogous theory which Handel and Miller had outlined but not published. Both theories show that $f$ is isotopic to a map $h$ which preserves a pair of transverse geodesic laminations on $S$. For compact surfaces, $h$ preserves a finite disjoint collection of simple closed curves on $S$ which decompose $S$ into periodic pieces and pseudo-Anosov pieces. Similarly, for endperiodic maps, $h$ preserves a disjoint collection of simple closed curves or lines on $S$ which decompose $S$ into finitely many $h$-orbits of compact surfaces, finitely many non-compact pieces on which a power of $h$ is a translation, and finitely many non-compact pseudo-Anosov pieces. Endperiodic surface maps arise naturally in three-dimensional hyperbolic geometry. For example, they can be seen as limit points in Thurston's fibered cone \cite{thurston1986norm}, and they are related to depth 1 foliations on closed 3-manifolds \cite{fenley1997end, landry2023endperiodic}.

\subsection{Statement of results}
Inspired by the work of Cantwell, Conlon and Fenley \cite{CCF21}, in this paper, we define analogously {\it endperiodic maps} and {\it generalized endperiodic maps} of an infinite graph and study their dynamical properties. Roughly speaking, a generalized endperiodic map is a cellular map from a graph to itself which is a proper homotopy equivalence (as defined in \cite{AKB}), such that a power of it fixes all the ends, and every end has a neighborhood whose forward or backward image under this power is contained in itself. An endperiodic map is a generalized endperiodic map which is a homeomorphism near the ends. See Definition \ref{def_endper} for precise definitions. We remark that our definition of an endperidoic map is 
the same as the one given by Meadow-MacLeod \cite{meadow2024end}.

For a finite type surface $S$, the Nielsen-Thurston theory provides a normal form for mapping classes of $S$. Analogously, Bestvina and Handel \cite{BH92} introduced {\it relative train track maps} as a ``normal form'' for an outer automorphisms of a finitely generated free group $F_n$.  For irreducible outer automorphisms of $F_n$, relative train track maps are the ``most efficient'' topological representatives.

The first goal of this paper is to adapt Bestvina-Handel's definition of relative train tracks to the infinite type setting for endperiodic maps. The precise definition is given in Definition \ref{def_reltt}.
Our first theorem gives a normal form for a generalized endperiodic graph map.

\begin{theorem}\label{thm_traintrack}
Let $f \colon G\to G$ be a generalized endperiodic (resp. endperiodic) map of an infinite graph $G$ with finitely many ends. Then $f \colon G \to G$ is conjugate to a generalized endperiodic (resp. endperiodic) relative train track map via a combinatorially bounded homotopy equivalence. Furthermore, if the relative train track map is endperiodic with only a single exponentially growing finite strata, then it can be made into a train track map.
\end{theorem}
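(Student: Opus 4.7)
The plan is to adapt the Bestvina--Handel algorithm from \cite{BH92} to the infinite setting, exploiting the fact that a generalized endperiodic map behaves in a controlled, essentially translation-like fashion outside a compact region. First I would use the definition of a generalized endperiodic map to extract some iterate $f^N$ fixing every end, together with a finite system of end neighborhoods $U_e$ on which $f^N$ is a contraction or an expansion into itself. Taking $f^N$-orbits of the boundaries of these $U_e$ produces a compact ``core'' subgraph $K \subset G$ outside of which the dynamics is periodic in a precise combinatorial sense. This yields an $f$-invariant filtration $\emptyset = G_0 \subset G_1 \subset \cdots \subset G_m = G$ whose strata are either finite (living in the core, and in a single fundamental domain on each end) or consist of the periodic translates of an end-fundamental-domain stratum.

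Next I would run the standard Bestvina--Handel moves---collapsing invariant forests, subdivisions, foldings, and valence-one/valence-two homotopies---stratum by stratum, starting from the bottom. On the finite strata in the core, the moves are performed exactly as in \cite{BH92}. On an end stratum, any move is first performed inside a single fundamental domain and then propagated equivariantly across all its $f^N$-translates; since a fundamental domain is finite, this propagation is a well-defined cellular modification of $G$. I would check that (a) each such move preserves both the generalized endperiodic structure and, in the endperiodic case, local injectivity near the ends (only subdivision and tightening are needed there), and (b) the resulting homotopy equivalence has uniformly bounded combinatorial displacement on every edge, since per move only a bounded pattern is repeated across the ends.

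The main obstacle will be \emph{termination}. Bestvina--Handel uses a strictly decreasing complexity (lexicographic in Perron--Frobenius eigenvalues and ranks of exponentially growing strata), but in our setting one must ensure that moves in end strata do not sabotage the complexity gained in the core, and vice versa. My strategy is to treat the ends first: put each end-fundamental-domain stratum into relative train track form once and for all (a finite problem), which stabilizes the transition matrices restricted to the ends. Subsequent moves can then be localized to a bounded neighborhood of the core that does not touch the stabilized end pieces, reducing termination to the classical finite Bestvina--Handel algorithm applied to a compact graph. Combinatorial boundedness of the total homotopy equivalence then follows by combining the (finite) core homotopy with the (periodic, hence bounded-per-domain) end homotopies.

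For the addendum, suppose the constructed relative train track map is endperiodic with a unique exponentially growing finite stratum $H_r$. Then every stratum below $H_r$ is either a zero stratum or polynomially growing, and every stratum above is either polynomially growing or an end stratum of translation type. Collapsing each maximal invariant forest in the lower polynomially growing and zero strata, and then homotoping away the upper strata using the relative train track property (each higher edge maps to a path that is tight on $H_r \cup G_{r-1}$), produces a graph on which the induced map is an honest train track map in the sense that no cancellation occurs at any vertex in positive iterates. Boundedness of the combinatorial stretch is preserved throughout because each collapse acts on only finitely many orbit classes of edges under the endperiodic symmetry.
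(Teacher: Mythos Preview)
Your approach to the first statement is broadly compatible with the paper's, though you overcomplicate the role of the end strata. In the paper's compatible filtration, the only infinite strata are $H_0$ (escaping edges, containing the attracting ends) and $H_m$ (backward escaping edges, containing the repelling ends); neither is exponentially growing, so the relative train track conditions (2)(a)--(c) apply vacuously there and no ``stabilization of the ends'' is needed. The paper simply runs the standard Bestvina--Handel moves on the finite strata in between, propagating any operation that touches a fundamental domain of a repelling end to all its preimages. Termination is handled by a boundedness argument: the number of edges in the finite part bounds both the number of exponentially growing strata and the size of their transition matrices, so only finitely many values of $\Lambda$ are possible. Your version would likely converge to this once you realized no separate end-processing is required, but as written the plan is more elaborate than necessary.

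The addendum, however, has a genuine gap. You propose to collapse invariant forests below $H_r$ and ``homotope away'' the strata above, but neither step is available: $H_0$ contains the (infinite) attracting-end neighborhoods, which are not forests and cannot be collapsed without changing the homotopy type; $H_m$ contains the repelling-end neighborhoods, which certainly cannot be homotoped away. More importantly, you do not address the actual obstruction. The relative train track conditions guarantee that an $H_r$-legal path stays $H_r$-legal under iteration, but its iterated images may still backtrack inside $H_0$. The paper's mechanism for eliminating this is quite different: it observes that, by endperiodicity (a homeomorphism near the ends), any illegal turn responsible for such backtracking must lie outside the periodic attracting neighborhoods, assigns to each such turn a \emph{height} (the number of iterates before its image enters a periodic neighborhood), and then folds the illegal turns of minimal height. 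Each round of folding strictly increases the minimal height, so the process terminates after finitely many steps; a final subdivision on $G\setminus G'$ makes $f$ edge-to-edge there. Without this height-and-fold argument, or a substitute for it, your reduction to a genuine train track map does not go through.
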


See Definition \ref{def:cbhe} and Definition \ref{def_tranmatrix} for the terms combinatorially bounded homotopy equivalence and exponentially growing, respectively.
We note that the combinatorially bounded homotopy equivalences in our case are also proper homotopy equivalences in the sense of \cite{AKB}.

\medskip

The dynamics of a relative train track map are encoded in the set of {\it stretch factors}. Associated to a  relative train track map $F \colon G \to G$, there is an invariant filtration $G_0\subseteq G_1\subseteq G_2\dots$ of the graph $G$. On each strata $H_i \defeq \overline{G_i\backslash G_{i-1}}$, the induced map has a stretch factor $\lambda_i \in \mathbb{R}$, which measures the asymptotic growth rate of a generic loop in $H_i$. We denote by $\lambda(F) \defeq \max_i\lambda_i$ the maximal stretch factor.

Let $f \colon G \to G$ be a generalized endperiodic map and let $F \colon \mathcal{G} \to \mathcal{G}$ be a generalized endperiodic relative train track representation of $f$.
Our next theorem gives a group theoretic interpretation of the dynamical quantity $\lambda(F)$. This implies that $\lambda(F)$ is a canonical quantity associated to the generalized endperiodic map $f$.

\begin{theorem}\label{thm_1.2}
Let $f \colon G \to G$ be a generalized endperiodic map, $F$ its generalized endperiodic relative train track representation, and $\lambda =\lambda(F) =\max_i \lambda_i(F)$. Then the following statements hold.
\begin{enumerate}
\item If $\lambda=0$, then for every loop $\gamma$ on $G$, any compact subgraph $G'$, there is some $n>0$ such that $f^{n'}(\gamma)$ is homotopic to a loop disjoint from $G'$ for all $n'>n$.
\item If $\lambda>1$, then
\[\log(\lambda)=\sup_{G'\subseteq G\text{ finite},~ \mathfrak{c} \text{ conjugacy class of }\pi_1(G)}\limsup_{n\rightarrow\infty}\frac{\log(l_w(P_{G'}(f_*^n(\mathfrak{c})))}{n}.\]
Moreover, there exist a finite subgraph $G'$ and a conjugacy class $\mathfrak{c}$ which realize the supremum.
\end{enumerate}
\end{theorem}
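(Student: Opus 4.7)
The plan is first to apply Theorem \ref{thm_traintrack} to pass to a generalized endperiodic relative train track representative $F \colon \mathcal{G} \to \mathcal{G}$ of $f$; since the conjugating map is a combinatorially bounded homotopy equivalence, both statements are invariant under this replacement and it suffices to prove them for $F$. Throughout I exploit the invariant filtration $G_0 \subsetneq G_1 \subsetneq \cdots$ and argue stratum by stratum on $H_i = \overline{G_i\backslash G_{i-1}}$.

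For part (1), the hypothesis $\lambda = 0$ forces each stratum to be either a zero stratum (every edge maps into strictly lower strata) or an end-translation stratum on which some power of $F$ pushes each edge deeper into an end neighborhood. Given a loop $\gamma$ and a finite subgraph $G'$, decompose $\gamma$ into maximal single-stratum segments. Each iterate of $F$ either drops a segment to a strictly lower stratum or translates it deeper into an end. Since $\gamma$ has finitely many segments and the filtration has finite length, after finitely many iterates every segment lies inside end neighborhoods that can be arranged to be disjoint from $G'$, giving the desired homotopy.

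For the upper bound in part (2), iterating the bounded cancellation property of a train track map stratum by stratum yields $l_w(F^n(e)) \le C\,n^{d}\lambda^n$ for every edge $e$, where $d$ bounds the filtration depth. For any conjugacy class $\mathfrak{c}$ represented by a loop $\gamma$ and any finite subgraph $G'$,
\[l_w(P_{G'}(F_*^n(\mathfrak{c}))) \;\le\; l_w(F_*^n(\mathfrak{c})) \;\le\; l_w(\gamma)\cdot C\, n^d \lambda^n,\]
so the $\limsup$ is bounded above by $\log\lambda$ uniformly in $(G',\mathfrak{c})$.

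The main obstacle is exhibiting a specific pair $(G',\mathfrak{c})$ that realizes the supremum. Choose a stratum $H_k$ with $\lambda_k = \lambda > 1$; since end-translation strata can contribute growth rate at most $1$, $H_k$ is finite and lies in the periodic core of $F$. Perron-Frobenius applied to the transition matrix of $H_k$ produces an edge $e \in H_k$ and a constant $c > 0$ such that the number of $H_k$-occurrences in the edge path $F^n(e)$ is at least $c\,\lambda^n$. I then take $\gamma$ to be an efficient loop in the train track structure passing through $e$, and let $G'$ be any finite subgraph containing the periodic core together with a forward-invariant collar wide enough to absorb bounded cancellation near its boundary. The hard step is to adapt the Bestvina-Handel no-cancellation argument for relative train tracks to the projected object $P_{G'}(F_*^n(\mathfrak{c}))$: one must show that the $\sim \lambda^n$ occurrences of $H_k$-edges survive projection and tightening up to a multiplicative constant, which rests on the fact that $F$ creates only legal turns in $H_k$, so only boundedly many cancellations occur per iterate.
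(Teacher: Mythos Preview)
Your overall strategy---pass to the relative train track $F$ via Theorem~\ref{thm_traintrack} and argue stratum by stratum---matches the paper's, and your treatment of part~(1) and of the upper bound in part~(2) is essentially correct. (For the upper bound the paper argues slightly differently: for any compact $H'$ the matrix $M_{F,H'}$ is block lower triangular along the filtration with diagonal blocks of spectral radius at most $\lambda$, so the growth rate of $l_{H'}$ is automatically $\le\log\lambda$. Your polynomial-times-$\lambda^n$ edge bound works as well, though you should be careful that ``$l_w$'' on the infinite-rank group $\pi_1(G)$ is not a priori meaningful; the paper handles this by working with $l_{G'}$ throughout.)

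Where you diverge, and make the argument harder than it needs to be, is the lower bound. You propose enlarging $G'$ to contain the periodic core plus a forward-invariant collar and then controlling cancellation under the projection $P_{G'}$; you flag this as ``the hard step'' and leave it as a sketch. The paper sidesteps all of this in two moves. First, it replaces $l_w(P_{G'}(\cdot))$ by the bilipschitz-equivalent quantity $l_{G'}(\mathfrak{c})$, the number of $G'$-edges in the tight loop for $\mathfrak{c}$, so no collapsing map is ever applied. Second, it takes $G'=H_k$ to be exactly the top exponentially growing stratum and $\gamma\subset G_k$ any loop crossing $H_k$. The relative train track axioms in Definition~\ref{def_reltt}(2) then give the count \emph{exactly}, with zero cancellation of $H_k$-edges: property~(1) makes all mixed turns in $(G_k,G_{k-1})$ legal, property~(3) keeps legal $H_k$-subpaths legal under $F$, and property~(2) prevents the connecting $G_{k-1}$-arcs from collapsing. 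Hence
\[
l_{H_k}(F_*^n(\mathfrak{c})) \;=\; [1,\dots,1]\,M_k^{\,n}\,x_\gamma,
\]
and the growth rate is $\lambda_k=\lambda$ on the nose. There is no collar and no bounded-cancellation bookkeeping; the step you identified as the obstacle is in fact immediate from the definition of a relative train track map once one measures with $l_{H_k}$ rather than $l_w\circ P_{G'}$.
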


The term $l_w(P_{G'}(\cdot))$ is defined as follows. Consider a finite subgraph $G'$ of $G$. Collapsing all edges in its complement $G \setminus G'$, we get a finite graph $G_1$ and a group homomorphism $P_{G'} \colon \pi_1(G)\rightarrow \pi_1(G_1)$. We denote by $l_w$ the word length on the finitely generated free group $\pi_1(G_1)$.

\begin{remark} \label{rmk_1.3}
A generalized endperiodic map $f \colon G \to G$ may have several relative train track representations. However, the maximal stretch factor is the same for any relative train track representation.
Indeed, if $F$ is a generalized endperiodic relative train track representation, by Theorem \ref{thm_1.2}, its maximal stretch factor $\lambda(F)$ can be expressed in terms of conjugacy classes of fundamental groups of finite subgraphs of $G$.
Therefore, the maximal stretch factor $\lambda(F)$ is a canonical quantity determined by the map $f$, and we sometimes denote it by $\lambda(f)$.
\end{remark}

\medskip

Theorem \ref{thm_traintrack} and Theorem \ref{thm_1.2} imply that generalized endperiodic relative train track representations of $f\colon G \to G$ have the smallest maximal stretch factor in the {\it combinatorially bounded} homotopy class of $f$. In fact, a stronger statement is true.
We prove in the next theorem that relative train track representations have the smallest maximal stretch factor in the proper homotopy class of $f$.

\begin{theorem} \label{thm_1.4}
Let $f_1 \colon G_1 \to G_1$ and $f_2 \colon G_2 \to G_2$ be generalized endperiodic map that are proper homotopy equivalent to each other. Let $F_i \colon G'_i \to G'_i$ be a relative train track representation of $f_i, i=1,2$. Then we have $\lambda(F_1) = \lambda(F_2)$. If $\lambda(F_1) \ge 1$, {\color{black} then $\log(\lambda(F_1)) = h_{\rm top}(F_1)$, the topological entropy of the induced map of $F_1$ on the one point compactification of $G_1'$.
Furthermore, if $f_3 \colon G_3 \to G_3$ is a generalized endperiodic map which is proper homotopy equivalent to $F_1$, then we have $\log(\lambda(F_1)) \le h_{\rm top}(f_3)$, 
the topological entropy of the induced map of $f_3$ on the one point compactification of $G_3$.}

\end{theorem}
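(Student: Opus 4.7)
The plan is to establish the three assertions in order, using Theorem \ref{thm_1.2} as the main technical tool to turn each into a statement about either fundamental group dynamics or symbolic dynamics.

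First I would prove $\lambda(F_1)=\lambda(F_2)$. By Theorem \ref{thm_1.2} together with Remark \ref{rmk_1.3}, $\log\lambda(F_i)$ is a canonical quantity depending only on $f_i$, characterized in the positive case as the supremum over finite subgraphs $G'$ and conjugacy classes $\mathfrak{c}$ of the limsup growth rate of $l_w(P_{G'}(f_{i*}^n(\mathfrak{c})))$, and in the vanishing case by the ``loop eventually exits every compact set'' condition. A proper homotopy equivalence $h\colon G_1\to G_2$ induces an isomorphism $h_\ast$ on fundamental groups intertwining $f_{1\ast}$ and $f_{2\ast}$, and sends cofinal sequences of finite subgraphs to cofinal sequences of finite subgraphs; the two collapse word-lengths agree up to bi-Lipschitz constants at each scale, which are absorbed by the $\limsup$. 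Hence the characterizations agree on both sides.

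Next, for $\log\lambda(F_1)=h_{\mathrm{top}}(F_1)$ on the one-point compactification $\bar G_1'$: the relative train track filtration has strata whose transition matrices have spectral radii $\lambda_i\le\lambda(F_1)$. The upper bound $h_{\mathrm{top}}(F_1)\le \log\lambda(F_1)$ follows from a Bowen--Misiurewicz edge-path count, because the number of length-$n$ admissible combinatorial paths grows at most polynomially in $n$ times $\lambda(F_1)^n$, and the point at infinity is fixed (by properness), so it contributes no additional entropy. For the lower bound, pick an exponentially growing stratum $H_{i_0}$ realizing $\lambda_{i_0}=\lambda(F_1)$; by the train track property, the restriction to $H_{i_0}$ carries a Markov structure, and the associated subshift of finite type of entropy $\log\lambda(F_1)$ embeds into the dynamics of $F_1$ on $\bar G_1'$.

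Finally, for the minimality inequality $\log\lambda(F_1)\le h_{\mathrm{top}}(f_3)$: apply Theorem \ref{thm_traintrack} to $f_3$ to produce a relative train track representative $F_3$ related to $f_3$ by a combinatorially bounded (hence proper) homotopy equivalence $\psi$, so by the first step $\lambda(F_3)=\lambda(F_1)$. By Theorem \ref{thm_1.2}(2), choose a finite subgraph $G'\subseteq G_3'$ and a conjugacy class $\mathfrak{c}$ that realize the supremum for $F_3$, and represent $\mathfrak{c}$ by a tight loop $\gamma\subseteq G'$. Then $f_3^n(\psi(\gamma))$ is properly homotopic to $\psi(F_3^n(\gamma))$, whose edge-count in any fixed finite enlargement of $\psi(G')$ grows at rate $\lambda(F_1)^n$. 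A Bowen-style counting argument then converts this exponential edge-growth into at least $\lambda(F_1)^{n(1-o(1))}$ pairwise $(n,\epsilon)$-separated orbits of $f_3$ lying inside a compact subgraph of $\bar G_3$, yielding $h_{\mathrm{top}}(f_3)\ge \log\lambda(F_1)$.

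The hard part will be this final conversion from a statement about fundamental group word-length growth to a genuine $(n,\epsilon)$-separation lower bound on the one-point compactification, where mass could in principle escape to $\infty$ and corrupt the entropy count. The crucial input that makes this feasible is that Theorem \ref{thm_1.2}(2) asserts the supremum is attained on a finite subgraph, so the full exponential dynamical complexity is exhibited inside a compact region uniformly bounded away from the point at infinity, allowing standard separating-set estimates to be applied there.
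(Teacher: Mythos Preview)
Your proposal is correct in outline and covers all three assertions, but it is organized differently from the paper's proof. The paper introduces an auxiliary invariant, the \emph{fundamental group entropy} $h(\pi_1(G),f,\{\ell_i\})$ with respect to the exhaustion metrics of Definition~\ref{l_idef}, and proves three short lemmas: (i) $h(\pi_1)\le h_{\rm top}$ for any generalized endperiodic map (Lemma~\ref{lem_2.130}); (ii) $\log\lambda(F)=h(\pi_1)=h_{\rm top}$ for relative train track maps, the latter equality via the variational principle (Lemma~\ref{lem_2.13}); and (iii) $h(\pi_1)$ is invariant under proper homotopy equivalence because the two exhaustion sequences of metrics interleave (Lemma~\ref{lem_6.1}). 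The theorem then follows by chaining these. Your approach instead works directly with the characterization in Theorem~\ref{thm_1.2}, proves $\log\lambda=h_{\rm top}$ by an explicit subshift embedding plus an edge-count upper bound rather than the variational principle, and for the last inequality constructs $(n,\epsilon)$-separated sets by hand from a loop realizing the supremum. What you gain is that you avoid introducing a new named invariant and the appeal to the variational principle; what the paper gains is modularity, since once $h(\pi_1)$ is set up each of the three assertions becomes a one-line deduction.

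Two small points to clean up. In your first step, the claim that the collapse word-lengths on the two sides are bi-Lipschitz at each scale is not automatic from proper homotopy equivalence alone (proper maps need not be combinatorially bounded); the argument that works is that for any finite $G_2'\subseteq G_2$, properness of $h$ gives a finite $G_1'\supseteq h^{-1}(G_2')$, and then only finitely many edges of $G_1'$ are involved, each with bounded image, which yields the one-sided Lipschitz inequality; the reverse comes from the proper homotopy inverse. In your third step there is a notational slip: Theorem~\ref{thm_1.2}(2) gives the realizing subgraph $G'$ inside $G_3$ (the domain of $f_3$), not inside $G_3'$, and the realizing loop $\gamma$ need not lie inside $G'$; you only need that its $f_3$-iterates have $l_{G'}$-length growing like $\lambda(F_1)^n$, which is exactly what feeds into the separated-set count you describe.
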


Before we move on to discuss the ideas of the proofs, we mention some other related works on graph maps and mapping class groups of infinite graphs. In \cite{AKB}, Algom-Kfir and Bestvina proposed a definition of the mapping class group of an infinite graph, an ``infinite-type'' analogue of ${\rm Out}(F_n)$, and proved the Nielsen realization theorem. In \cite{DHK23,DHK23b}, Domat, Hoganson and Kwak studied the coarse geometry of pure mapping class groups of infinite graphs.  In \cite{meadow2024end}, Meadow-MacLeod considered endperiodic maps and endperiodic train track maps of an infinite graph. While our definition of endperiodic maps coincides with Meadow-MacLeod's, our concept of relative train track maps is slightly different from Meadow-MacLeod's concept of endperiodic train track maps, as we do not have any constraints on a relative train track map on the {\it non-exponentially growing components}.

While our methods work for generalized endperiodic maps, in the case of endperiodic maps, it is possible to modify our relative train tracks into the ones as defined in \cite{meadow2024end}, which would allow one to study their mapping torus and obtain results analogous to \cite{fenley1997end, landry2023endperiodic}.

\subsection{Strategy of the proofs}
We prove Theorem \ref{thm_traintrack} by adapting the Bestvina-Handel algorithm \cite[Section 5]{BH92}. Building upon previous works, especially Stallings' foldings \cite{Sta83}, Bestvina and Handel introduced {\it moves} that can be applied to a graph map $f \colon G \to G$ of a finite graph to make it efficient. Here a graph map is {\it efficient} if the image of every edge is a path which does not have backtracks. If $f$ is irreducible, namely, $f$ does not preserve any non-trivial proper subgraph of $G$, then the efficiency of $f$ is measured by the (Perron-Frobenius) eigenvalue of its transition matrix. For irreducible graph maps, Bestvina and Handel gave an algorithm \cite[Section 1]{BH92} which takes an irreducible graph map as an input and outputs an efficient irreducible graph map. The algorithm consists of applying the moves in a certain way such that it stops when the graph map is made efficient. The key point of the validity of the algorithm is that it eventually stops. This is achieved by decreasing (or not increasing) the Perron-Frobenius eigenvalue of the transition matrix after applying a move at each step of the algorithm, and the possible values for the eigenvalue are finite.
For general graph maps that are not necessarily irreducible, Bestvina and Handel generalized the algorithm in \cite[Section 5]{BH92} by considering a filtration of $G$ determined by the dynamics of $f$.

An endperiodic graph map $f \colon G \to G$ always preserves a subgraph due to the existence of the attracting ends. However, the behavior of $f$ near an end of $G$ is sufficiently simple so that there is still a stratification of $G$ based on the dynamics of $f$. We follow the general framework of the Bestvina-Handel algorithm, except that we need to take care of the ends of the graph. In particular, we apply the moves introduced by Bestivina and Handel to show that the eigenvalues $\Lambda(f)$ are non-increasing with respect to the lexicographical order. Since the possible values of $\Lambda(f)$ are finite, the algorithm eventually stops and we obtain an endperiodic relative train track map.

To prove Theorem \ref{thm_1.2}, we compare $\log \lambda(F)$ with the spectral radii of the transition matrices of compact subgraphs of $G$. On the other hand, these spectral radii are also related to the exponential growth rate of lengths of iterates of loops in the subgraph, which represents conjugacy classes in the fundamental group $\pi_1(G)$.

To prove Theorem \ref{thm_1.4}, we define the {\it fundamental group entropy} $h(\pi_1(G),f,\{\ell_i\})$ of a generalized endperiodic map $f \colon G \to G$ with respect to a sequence $\{\ell_i\}$ of finite metrics on $G$. A key observation is that for any two generalized endperiodic map $f_i \colon G_i \to G_i,i=1,2$, there exist sequences of finite metrics $\{\ell_{i,j}\}_{j\ge1}$ on $G_i$ such that $h(\pi_1(G_1),f_1,\{\ell_{1,j}\}) = h(\pi_1(G_2),f_2,\{\ell_{2,j}\})$; see Lemma \ref{lem_6.1}. Then we show that for generalized endperiodic relative train track map $f \colon G \to G$, we have $\log \lambda(f) = h(\pi_1(G),f,\{\ell_i\})$.

\subsection{Organization of the paper} The paper is organized as follows. In Section \ref{sec_prelim}, we give definitions and basic properties of generalized endperiodic maps and realtive train track maps. In Section \ref{sec_moves} we review the ``moves'' in the Bestvina-Handel algorithm, and in Section \ref{sec_thm1.1} we prove Theorem \ref{thm_traintrack}. In Sections \ref{sec_thm1.2} and \ref{sec_pf_thm1.4}, we prove Theorems \ref{thm_1.2} and \ref{thm_1.4} respectively. We record some further questions in Section \ref{sec_questions}.

\section{Definitions} \label{sec_prelim}
In this section, we define endperiodic maps and generalized endperiodic maps in Section \ref{sec_def_end}. Adapting the work of Bestvina-Handel \cite{BH92} to the infinite type setting, we define endperiodic relative train track maps in Section \ref{sec_def_rtt}. In Section \ref{sec_def_cf}, we define a compatible filtration associated to a generalized endperiodic map. Finally in Section \ref{sec_pi1_entropy}, we define the fundamental group entropy of a generalized endperiodic map with respect to (a sequence of) metrics on the graph, which will be used to prove Theorem \ref{thm_1.4}.

\subsection{Endperiodic maps and generalized endperiodic maps} \label{sec_def_end}
An {\it infinite graph} $G$ is a connected cell complex of dimension 1 with countably infinitely many vertices and edges. In this paper we assume that $G$ is {\em locally finite}, i.e., each $0$ cell can only be associated with finitely many $1$ cells. An {\em end} of a graph $G$ is an equivalence class of a nested sequence of connected sets $\{V_n\}$ such that $G \supset V_1 \ldots \supset V_n \supset \overline{V}_{n+1} \supset V_{n+1} \supset \ldots$ and $\cap_{n=1}^\infty V_n = \emptyset$. Two such sequences of sets $\{V_n\}$ and $\{U_m\}$ are {\em equivalent} if for every $n \ge 1$, there exists an $m$ such that $V_n \supset U_m$ and for every $m \ge 1$, there exists an $n$ such that $U_m \supset V_n$. A totally disconnected topology on the space of ends can be defined by setting the open sets as the ends of connected components of $\overline{G\backslash V}$, where $V$ goes through all finite subgraphs.

A {\em graph map} $f \colon G \to G$ is a cellular map which is also a homotopy equivalence. Given a graph map $f : G \to G$, an end $\mathcal{E}$ of $G$ is called {\em attracting} if there is a neighborhood $U = U_{\mathcal{E}}$ of $\mathcal{E}$ such that $f^n(U) \subsetneq U$ for some $n\geq 1$, and $\bigcap_{j=1}^\infty f^{jn}(U)=\emptyset$. An end of $G$ is called {\em repelling} if there is a neighborhood $U = U_{\mathcal{E}}$ of $\mathcal{E}$ such that $f^{-1}(U)\subsetneq U$ for some $n\geq 1$, and $\bigcap_{j=1}^\infty f^{-jn}(U)=\emptyset$. The above neighborhoods $U_{\mathcal{E}}$ are called {\it $f$-neighborhoods} of an end $\mathcal{E}$.

\begin{definition}\label{def_endper}
A graph map $f \colon  G \to G$ is called {\em generalized endperiodic} if $G$ has finitely many ends, and each end is either attracting or repelling. It is called {\em endperiodic} if it is generalized endperiodic and its restriction to the $f$-neighborhoods of the ends are homeomorphisms that send edges to edges. When $f$ is endperiodic, those neighborhoods are called {\em periodic neighborhoods} around the contracting or repelling ends.
\end{definition}

\begin{remark}
A generalized endperiodic graph map is a proper homotopy equivalence, so is an endperiodic graph map.
\end{remark}

\begin{definition}\label{def:cbhe}
A {\em combinatorially bounded homotopy equivalence} is a cellular map $\psi \colon G \to G'$ and its homotopic inverse $\phi$, such that there exists a constant $L>0$, such that every edge $e$ in $G$ is sent to a path of no more than $L$ edges by $\psi$, and every edge $e'$ in $G'$ is sent to a path of no more than $L$ edges by $\phi$.
\end{definition}

\begin{definition} \label{def_tranmatrix}
   Let $f \colon G\rightarrow G$ be a graph map, $V$ a finite subgraph of $G$ consisting of edges $e_1, \dots, e_n$.
   \begin{enumerate}
       \item The {\em transition matrix} $M_{f, V}$ of $f$ on $V$ is an $n\times n$ matrix where the $(i, j)$-th entry equals the number of times the edge path $f(e_j)$ crosses $e_i$ in either directions.
       \item $V$ is called an {\em irreducible} subgraph if the transition matrix is irreducible, i.e., if for any $i, j \in \{1,\ldots,n \}$, there is some $N>0$ such that the $(i, j)$-th entry of $M_{f, V}^N$ is positive.
       \item We say that $V$ is {\em exponentially growing} if the spectral radius $\lambda$ of $M_{f, V}$ is strictly greater than $1$.
   \end{enumerate}
\end{definition}

We give examples of generalized endperiodic maps and endperiodic maps.
\begin{example}
\begin{enumerate}
\item Let $G_0$ be a connected finite graph, $G$ an infinite cyclic cover of $G_0$, such that $h$ is a generator of the deck group. Let $g$ be a compactly supported self map on $G$, then $g\circ h$ is endperiodic with one attracting and one repelling end.
\item Let $G_0$ be a connected finite graph, $f\colon G_0\rightarrow G_0$ a cellular map which is a homotopy equivalence and can be lifted to an infinite cyclic cover $G$ as $f'$. Let $g$ be a compactly supported self map on $G$ and $h$ be a generator of the deck group. Then for $n \gg 0$, the map $g\circ h^n \circ f'$ is generalized endperiodic but not endperiodic in general.
\end{enumerate}
\end{example}

\subsection{Relative train track maps} \label{sec_def_rtt}
In this section, we extend the definition of relative train track maps of Bestvina-Handel \cite{BH92} to the infinite setting.

A {\it filtration} of a graph map $f \colon  G \to G$ is an increasing sequence of (not necessarily connected) invariant subgraphs $G_0 \subset \cdots \subset G_m = G$ for some $m \in \mathbb N$. The subgraph $H_i \defeq {\rm cl}(G_i\setminus G_{i-1})$ (where $G_{-1}=\emptyset$) is called the {\it $i^{\rm th}$ stratum}. When $H_i$ is finite, let $M_i$ be the transition matrix for $H_i$.

Following Bestvina-Handel \cite{BH92}, by a {\em turn} in $G$, we mean an unordered pair of oriented edges of $G$ originating at a common vertex. A turn with one edge in $H_i$ and one edge in $G_{i-1}$ is called a {\em mixed turn} in $(G_i,G_{i-1})$. A turn is {\it non-degenerate} if it is not an oriented edge followed by its inverse, and is {\it degenerate} if otherwise. A map $f \colon G \to G$ induces a self-map $Df$ on the set of oriented edges of $G$ by sending an oriented edge to the first oriented edge in its $f$-image; this induces a map $Tf$ on the set of turns in $G$.

A turn is {\it illegal} with respect to $f \colon G\to G$ if its image under some iterate of $Tf$ is degenerate; a turn is {\it legal} otherwise. A {\it path} in $G$ is a sequence of consecutive edges $e_0e_1\dots e_d$. We call a path {\em legal} if all the turns $\{e_i, e_{i+1}\}$ are legal.

\begin{definition}[{\cite{BH92}}]
A cellular homotopy equivalence $f \colon G\to G$ is called a {\em train track map} if the $f$-image of every edge in $G$ is a path without illegal turns.
\end{definition}

We extend the definition of relative train tracks in \cite{BH92} to the infinite graph case as follows.

\begin{definition} \label{def_reltt}
A cellular homotopy equivalence $f \colon G\to G$ of an infinite graph $G$ is called a {\em relative train track map} if it fixes a filtration $G_0\subseteq G_1 \subseteq \dots \subseteq G_m=G$, such that
\begin{enumerate}[1.]
    \item The only possible infinite strata are $H_0=G_0$ and $H_m=\overline{G_m\backslash G_{m-1}}$. Given any compact subgraph $G'$, any edge $e$ in an infinite stratum, there is some $n$ such that $f^{n'}(e)\cap G'=\emptyset$ for all $n'>n$ (in this case we call $e$ an {\em escaping edge}) or $f^{-n'}(e)\cap G'=\emptyset$ for all $n'>n$ (in this case we call $e$ a {\em backward escaping edge}).
    \item If $H_r$ is a finite strata which is exponentially growing (see Definition \ref{def_tranmatrix}), then it is irreducible, and furthermore:
\begin{enumerate}[(1)]
\item $Df$ maps the set of oriented edges in $H_r$ to itself; in particular, all mixed turns in $(G_r,G_{r-1})$ are legal.
\item If $\alpha \subset G_{r-1}$ is a nontrivial path with endpoints in $H_r \cap G_{r-1}$, then $[f(\alpha)]$ is a nontrivial path with endpoints in $H_r \cap G_{r-1}$.
\item For each legal path $\beta \subset H_r$, $f(\beta)$ is a path that does not contain any illegal turns in $H_r$.
\end{enumerate}
\end{enumerate}
\end{definition}

\subsection{Compatible filtrations} \label{sec_def_cf}
Given a generalized endperiodic map $f \colon  G\to G$, we construct a filtration of $G$, called the {\em compatible filtration}, whose finite components are irreducible.

\begin{definition}\label{ini_filtration}
Let $f \colon G\to G$ be a generalized endperiodic map. Let $G_0$ be the subgraph of $G$ consisting of escaping edges, $G'$ be the subgraph consisting of all non backward escaping edges. Then all the $f$-neighborhoods of ends are contained in either $G_0$ or $G\backslash G'$, hence $\overline{G'\backslash G_0}$ is a finite graph. Now build a filtration $G_0\subsetneq G_1\subsetneq \dots \subsetneq G_{m-1}=G'\subset G$, such that each component that is exponentially growing (see Definition \ref{def_tranmatrix}) is irreducible. This filtration is called the {\em compatible filtration}.
\end{definition}

Given a generalized endperiodic map $f$ and a compatible filtration $G_0\subseteq \dots \subseteq G_m=G$, we let $\lambda_i$ be the spectral radius of the $M_i$ as in Definition \ref{def_reltt} when $H_i$ is finite. If $H_i$ is infinite, by construction all edges in $H_i$ are escaping or backward escaping, so we set the corresponding $\lambda_i$ to be $0$. Let $\Lambda(f)$ be the set of $\lambda_i$ which are strictly greater than $1$ ordered from large to small, and $\lambda(f)$ be the maximal element in $\{\lambda_i\}$.

\subsection{Fundamental group entropy} \label{sec_pi1_entropy}
Given a generalized endperiodic map $f \colon G \to G$ and a sequence of finite metrics $\{\ell_i\}$ on $G$, in this section, we define the fundamental group entropy $h(\pi_1(G),f,\{\ell_i\})$ and study its relationship with the topological entropy $h_{\rm top}(f)$ of $f$.

\begin{definition}
Given a generalized endperiodic map $f \colon G \to G$ and a metric $\ell$ on $G$ such that the sum of all edge lengths is finite,
we define the {\it fundamental group entropy $h(\pi_1(G),\ell)$ with respect to $\ell$} by
$$h(\pi_1(G),\ell) \defeq \sup_\mathfrak{c} \limsup_{n \to \infty} \frac{\log \ell(f^n(\mathfrak{c}))}{n}$$
where the supremum is taken over all the {\color{black} conjugacy classes} $\mathfrak{c}$ in $\pi_1(G)$, and $\ell(\mathfrak{c})$ for a conjugacy class $\mathfrak{c}$ is the length of the shortest loop representing $\mathfrak{c}$ as measured by $\ell$.
\end{definition}

We say that $\ell\lesssim \ell'$ if there is some $C>0$ such that $\ell(e) \leq C\ell'(e)$ for any edge $e$ in $G$.

\begin{definition}
Given a generalized endperiodic map $f \colon G \to G$ and a sequence $\{\ell_i\}$ of metrics on $G$, such that $\ell_i\lesssim \ell_{i+1}$ for every $i$, the {\it fundamental group entropy $h(\pi_1(G),f,\{\ell_i\})$ with respect to $\{\ell_i\}$} is defined as
$$h(\pi_1(G),f, \{\ell_i\}) \defeq \lim_{i \to\infty} h(\pi_1(G),\ell_i) \in \mathbb{R}\cup \{\infty\}.$$
\end{definition}
\begin{remark} \label{rmk_2.11}
The above definition depends on the choice of the sequence $\{\ell_i\}$ of metrics on $G$. However, if we have two sequences $\{\ell_i\}$ and $\{\ell_i'\}$ such that for any sufficiently large $i$, there are $j, j'$ such that $\ell'_j\lesssim \ell_i\lesssim \ell'_{j'}$, then $h(\pi_1(G), f, \{\ell_i\})=h(\pi_1(G), f, \{\ell'_i\})$. We call such two sequences {\it interleaving}.
\end{remark}

\begin{definition}\label{l_idef}
Let $f\colon G \to G$ be a generalized endperiodic map. Let $\{G_i\}$ be a compact exhaustion of $G$, such that the complement consists of contracting or repelling neighborhoods of the ends. For each $i$, let $\ell_i$ be defined as assigning length $1$ to all edges in $G_i$ and $0$ to all other edges. 
\end{definition}

\begin{remark}\label{rmk_topentropy}
{\color{black} The graph $G$ is not compact. However, since the endperiodic map $f \colon G \to G$ is proper, it induces a continuous map from $\overline{G}$, which is the one-point compactification of $G$, to itself. Denote by $h_{\rm top}(f)$ the topological entropy of this induced map on the compact space $\overline{G}$.}
\end{remark}

\begin{lemma} \label{lem_2.130}
Let $f \colon G \to G$ be a generalized endperiodic map, $\{\ell_i\}$ be defined as in Definition \ref{l_idef}. Then we have $h(\pi_1(G),f,\{\ell_i\}) \le h_{\rm top}(f)$.
\end{lemma}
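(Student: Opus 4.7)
The plan is to reduce the statement to a preimage-count bound and then apply a transition-matrix estimate. I fix $i$ and a conjugacy class $\mathfrak{c}\in\pi_1(G)$ represented by a loop $\gamma$ which, being compact, lies in some finite subgraph of $G$. I first observe that $\ell_i(f^n(\mathfrak{c}))\le \ell_i(f^n\circ\gamma)$ and rewrite
\[
\ell_i(f^n\circ\gamma)\;=\;\sum_{e\in E(G_i)}\big|\{x\in\gamma:f^n(x)=m_e\}\big|,
\]
where $m_e$ denotes the midpoint of $e$: each traversal of a $G_i$-edge by $f^n\circ\gamma$ is in bijection with the unique preimage of $m_e$ in the corresponding maximal sub-arc of $\gamma$. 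It thus suffices to show $\limsup_n\tfrac{1}{n}\log|f^{-n}(m_e)\cap\gamma|\le h_{\rm top}(\bar f)$ for each $e\in E(G_i)$, where $\bar f$ is the extension of $f$ to the one-point compactification $\overline G$; the lemma then follows by taking $\sup_{\mathfrak c}$ and $\lim_{i\to\infty}$.

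Next, I would choose $V\subseteq G$ as the complement of small $f$-neighborhoods $U_{\rm att}$ and $U_{\rm rep}$ of the attracting and repelling ends, so that $V$ is a finite subgraph containing $G_i\cup\gamma$ and, after passing to a power of $f$ if necessary, $f(U_{\rm att})\subseteq U_{\rm att}$ and $f(U_{\rm rep}^c)\subseteq U_{\rm rep}^c$. The generalized endperiodic hypothesis then implies that every orbit $x_0,x_1,\ldots,x_n$ with $x_0\in\gamma\subseteq V$ and $x_n=m_e\in V$ stays entirely in $V$: $x_k\in U_{\rm att}$ would force $x_n\in U_{\rm att}$ by forward invariance, and $x_k\in U_{\rm rep}$ is excluded by $x_0\notin U_{\rm rep}$ together with forward invariance of $U_{\rm rep}^c$. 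Let $M=M_{f,V}$ denote the partial transition matrix, with $(M)_{e',e}$ the number of crossings of $e'$ by $f(e)$ for $e,e'\in V$. A direct induction using these invariance facts gives $|f^{-n}(m_e)\cap\gamma|=(M^n\vec v_\gamma)_e\le C\rho^n$, where $\vec v_\gamma$ is the $V$-edge multiplicity vector of $\gamma$ and $\rho$ is the spectral radius of $M$.

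The final step is to prove $\log\rho\le h_{\rm top}(\bar f)$. I plan to embed the subshift of finite type $\Sigma_M$ associated with $M$ into the $\bar f$-dynamics on $\overline G$: for each admissible one-sided sequence of $V$-edges, the nested intersection of closed preimages is a non-empty compact subset of $\overline G$, and picking a point therein yields an $\bar f$-orbit tracking the sequence; distinct sequences give orbits that are $(n,\epsilon)$-separated for $\epsilon$ smaller than the minimum inter-edge-interior distance in $V$. This produces a closed $\bar f$-invariant subset on which $\bar f$ is semiconjugate to $\Sigma_M$, whence $\log\rho=h_{\rm top}(\Sigma_M)\le h_{\rm top}(\bar f)$. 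The main obstacle will be this symbolic embedding: one must enhance the alphabet of $\Sigma_M$ by the specific ``crossing branches'' of $f$ (reflecting that $f$ is cellular with potentially multiple local homeomorphic branches on each edge) so that distinct sequences produce genuinely $d$-separated orbits, and one must translate estimates from $f^N$ back to $f$ when passing to a power is required for the invariance arguments.
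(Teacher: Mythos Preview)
Your argument is correct but follows a substantially different route from the paper's. The paper proceeds in one stroke: it fixes a genuine metric $\ell$ on the one–point compactification $\overline{G}$ with $\ell\ge \ell_i$, picks a loop $\gamma$ whose $\ell_i$–length under iteration grows like $(\lambda-\epsilon)^n$ where $\lambda=e^{h(\pi_1(G),\ell_i)}$, observes that then $\ell(f^n(\gamma))\ge O((\lambda-\epsilon)^n)$, and invokes the classical fact that the exponential growth rate of the length of an iterated curve in a compact one–dimensional space bounds the topological entropy from below (one reads off $(n,\epsilon')$–separated sets on $\gamma$ of size comparable to $\ell(f^n(\gamma))$). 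No transition matrices or subshifts enter.

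Your approach inserts an intermediate invariant: you bound $\ell_i(f^n\circ\gamma)$ above by $C\rho^n$ via the transition matrix $M_{f,V}$ on a finite subgraph $V$, using the endperiodic structure to trap the relevant orbits inside $V$, and then compare $\log\rho$ with $h_{\rm top}(\bar f)$ by embedding the associated subshift. This is heavier (it requires the branch–labeled alphabet refinement and the power–of–$f$ bookkeeping you flag), but it yields the sharper statement $\log\rho(M_{f,V})\le h_{\rm top}(\bar f)$ for every such $V$, not merely the growth rate of a single loop. Note also that the passage to a power can be avoided entirely: for an attracting cycle of period $n$ with $f^n(U)\subsetneq U$, the union $\bigcup_{j=0}^{n-1}f^j(U)$ is already $f$–forward–invariant (and dually for repelling cycles), so you may choose $U_{\rm att}$ and $U_{\rm rep}$ with $f(U_{\rm att})\subset U_{\rm att}$ and $f(U_{\rm rep}^c)\subset U_{\rm rep}^c$ from the start.
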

\begin{proof}
{\color{black}We only need to show that for any $\ell_i$, 
we have $h(\pi_1(G), \ell_i)\leq h_{\rm top}(f)$. Pick a metric $\ell$ on $\overline{G}$ such that edges of $G_i$ all have length $1$. Let $\lambda=e^{h(\pi_1(G), \ell_i)}$. Then for any $\epsilon>0$, there is some conjugacy class $\mathfrak{c}$ represented by a loop $\gamma$ such that
\[\ell(f^n(\gamma))\geq \ell_i(f^n(\gamma))\geq \ell_i(f^n(\mathfrak{c}))\geq O((\lambda-\epsilon)^n)\]
as $n\to\infty$. Now pick some sufficiently small $\epsilon'$, we get $(n, \epsilon')-$separated sets on $\gamma$ under metric $\ell$ of size $O((\lambda-\epsilon)^n)$. This shows that $h_{top}(f)\geq \log(\lambda-\epsilon)$. We obtain the conclusion by letting $\epsilon\to 0$.
}


\end{proof}

\section{Moves} \label{sec_moves}
In this section, we recall the moves that were introduced by Bestvina-Handel \cite{BH92} to be applied to a graph map in order to make it (relatively) efficient. We summarize the moves below and refer the reader to \cite[Section 1]{BH92} for details.
\begin{enumerate}
\item Pulling tight: A homotopy equivalence $f \colon G \to G$ is {\it tight} if for each edge $e$ of $G$, either $f$ is locally injective on the interior of $e$ or $f(e)$ is a vertex. A homotopy equivalence $f \colon G \to G$ can be {\it pulled tight} by a homotopy rel the set of vertices of $G$.
\item Collapsing an invariant forest: If the graph map $g \colon G \to G$ has an invariant forest $G_0$, i.e., a subgraph each of whose components is contractible, then define $G_1 \defeq G/G_0$ and define $f_1 \colon G_1 \to G_1$ by $f_1 \defeq \pi f \pi^{-1}$ where $\pi \colon G \to G_1$ is the quotient map.
\item Subdivision: Suppose $w$ is not a vertex of $G$ but $f(w)$ is a vertex of $G$. Then we {\it subdivide} the edge containing $w$ by adding a vertex at $w$.
\item Valence-one homotopy: Suppose $f \colon G \to G$ is a homotopy equivalence and $v$ is a valence-one vertex of $G$ with incident edge $e$. The goal of a valence-one homotopy is to remove $e$ in the image of $f$. To this end,
let $G_1$ be the subgraph of $G$ obtained by removing $v$ and the interior $e$. Define $f_1 \colon G_1 \to G_1$ by $f_1 \defeq \pi_1f|_{G_1}$ where $\pi_1 \colon G \to G_1$ is the projection map. Then $f_1$ is said to be obtained from $f$ by a {\it valence-one homotopy}.
\item Valence-two homotopy: Suppose $v$ is the initial vertex of an edge $e_m$ and the terminal vertex of an edge $e_{m-1}$. A {\it valence-two homotopy} is a homotopy which collapses $e_m$ to a point and stretches $e_{m-1}$ across $e_m$, followed by a tightening, an inverse subdivision to remove $v$ and collapsing a maximal pretrivial forest.
\item Folding: The goal of folding is to eliminate a pair of edges originating at a common vertex of $G$ which have the same images under $f$. Let $G_1$ be the graph obtained from $G$ by identifying these two edges such that $f \colon G\to G$ descends to a well-defined map $f_1 \colon G_1 \to G_1$. Then $f_1$ is said to be obtained from $f$ by an {\it elementary fold}. In general, we can fold a subset of an edge with a subset of another edge.
\end{enumerate}

\section{Proof of Theorem \ref{thm_traintrack}} \label{sec_thm1.1}
In this section, we prove Theorem \ref{thm_traintrack}. The proof consists of two steps. We first show in Section \ref{sec_safe_moves} that we can apply Bestvina-Handel moves defined in the previous section to make the vector of stretch factors $\Lambda(f)$ minimal. In Section \ref{sec_4.2}, we show that in the proper homotopy class of maps with $\Lambda(f)$ minimal, there is a relatvie train track map. Finally, in Section \ref{ex2}, we give an example to demonstrate the algorithm given in the proof of Theorem \ref{thm_traintrack}.

Let $f \colon G \to G$ be a generalized endperiodic map. 
We consider a compatible filtration $\emptyset = G_{-1} \subset G_0 \subset G_1 \subset \cdots \subset G_{m-1} \subset G_m = G$ of $(G,f)$ as in Definition \ref{ini_filtration}.

\subsection{Safe moves} \label{sec_safe_moves}
Recall the moves in Section \ref{sec_moves}. A move is {\it safe} if it does not increase $\Lambda(f)$ and is {\it beneficial} if it strictly decreases $\Lambda(f)$. We first analyze the moves to determine if they are safe. Our analysis works almost the same way as Bestivina-Handel \cite[Section 5]{BH92}, except we need to take care of the repelling ends.

The first lemma states that subdivision is safe and that valence-one homotopy and folding are safe and sometimes beneficial; see also \cite[Lemmas 5.1-5.3]{BH92}.

\begin{lemma}\label{lemsafemv}
	We have the following:
	\begin{enumerate}
		\item If $f'\colon G'\to G'$ is obtained from $f \colon G\to G$ by subdivision, then $\Lambda(f') = \Lambda(f)$.
		\item If $f'\colon G'\to G'$ is obtained from $f \colon G\to G$ by a valence-one homotopy at a vertex $v$, then $\Lambda(f') \le \Lambda(f)$. Moreover, if the stratum $H_r$ containing the edge incident to $v$ is exponentially growing, then $\Lambda(f') < \Lambda(f)$.
		\item Suppose $f'\colon G'\to G'$ is obtained from $f \colon G\to G$ by folding the entire edge $E_i \subset H_i$ with the entire edge $E_j \subset H_j$ and $f'':G''\to G''$ is obtained from $f' \colon G'\to G'$ by tightening and collapsing a pretrivial forest. If $H_i = H_j$ is exponentially growing and if the nontrivial tightening happens in the stratum of $G'$ determined by $H_i$, then $\Lambda(f'') < \Lambda(f)$. Otherwise, we have $\Lambda(f'') = \Lambda(f)$.
	\end{enumerate}
\end{lemma}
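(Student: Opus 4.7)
The plan is to follow the stratum-by-stratum analysis of Bestvina and Handel \cite[Section~5]{BH92}, with the main adjustment being a check that the infinite strata $H_0$ and $H_m$ of the compatible filtration cause no new issues. Since every edge of $H_0$ and $H_m$ is escaping or backward escaping, their associated spectral radii are set to $0$ by convention (Section~\ref{sec_def_cf}) and do not contribute to $\Lambda(f)$. Thus each move needs to be analyzed only on the finite strata $H_1, \ldots, H_{m-1}$, exactly as in the finite-graph case. A second reduction is that each move is local: it alters $f$ only in a neighborhood of the affected edge or edges, so every stratum not meeting them is untouched on the nose, with transition matrix and spectral radius preserved.

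For part~(1), subdivision of an edge $e \subset H_r$ at a point $w$ with $f(w)$ a vertex replaces $e$ by two consecutive edges $e_1, e_2$ lying in the same position of the filtration. I would check that $M_r'$ and $M_r$ have the same nonzero spectrum: any Perron--Frobenius eigenvector $v$ of $M_r$ extends to one of $M_r'$ by setting $v_{e_1} = v_{e_2} = v_e$, since a traversal of $e$ by any $f$-image decomposes into one traversal of $e_1$ followed by one of $e_2$. The reverse inequality on the spectral radius comes from collapsing $e_1 e_2$ back to $e$, so $\lambda_r' = \lambda_r$ and $\Lambda(f') = \Lambda(f)$.

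For part~(2), let $e$ be the edge incident to the valence-one vertex $v$, with $e \in H_r$. After the homotopy, every occurrence of $e$ or its reverse in an edge-path $f(e')$ is removed and the image is tightened, after which $e$ and $v$ are deleted. By $f$-invariance of $G_{r-1}$, edges in lower strata do not traverse $e$, so their transition data is unchanged; for edges in higher strata $H_s$ with $s > r$, any lost crossings of $e$ lie in $G_r$ but not in $H_s$, so $M_s$ is unchanged. The new matrix $M_r'$ is a principal submatrix of $M_r$ with possibly further-decreased entries from tightening, so its spectral radius weakly decreases. When $H_r$ is exponentially growing it is irreducible by the compatibility assumption, and the strict monotonicity of the Perron--Frobenius eigenvalue under deletion of a row and column of an irreducible non-negative matrix yields $\lambda_r' < \lambda_r$; hence $\Lambda(f') < \Lambda(f)$ lexicographically.

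Part~(3), the folding case, is where most of the care is needed, since the fold can interact nontrivially with the filtration. If $i \ne j$, the equality $f(E_i) = f(E_j)$ forces both images to lie in $G_{\min(i,j)}$, so $E_{\max(i,j)}$ can be relocated to a lower stratum by reorganizing the filtration without altering any spectral radius. If $i = j = r$ and no tightening occurs in $H_r$, the columns of $M_r$ corresponding to $E_i$ and $E_j$ are equal (both count crossings by the common $f$-image) while the merged row is their sum, and a short linear-algebra check shows the nonzero spectrum is preserved, giving $\Lambda(f'') = \Lambda(f)$. Finally, if $i = j = r$ is exponentially growing and tightening strictly shortens some $f$-image within $H_r$, the new transition matrix is strictly dominated entrywise by the one obtained without tightening, so by irreducibility its Perron--Frobenius eigenvalue is strictly smaller and $\Lambda(f'') < \Lambda(f)$. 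The residual bookkeeping is that a fold can produce a reducible exponentially growing block which must be refined to restore compatibility; one checks that refining a reducible non-negative block into its irreducible pieces only weakly decreases each spectral radius, preserving the lexicographic value of $\Lambda$.
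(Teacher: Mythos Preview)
Your proposal is correct and follows the same stratum-by-stratum analysis as the paper, which in turn tracks \cite[Lemmas~5.1--5.3]{BH92}; the spectral-radius arguments you give for subdivision, valence-one homotopy, and folding match the paper's reasoning, and your observation that the infinite strata $H_0$ and $H_m$ contribute $0$ to $\Lambda(f)$ is exactly the reduction the paper relies on. The one point the paper adds and you omit is a prescription for propagating a move to all preimages when it touches a fundamental domain of a repelling end (e.g., subdividing or collapsing all preimages of an affected edge); this is not needed for the $\Lambda$ inequalities you prove, but it is what guarantees the resulting map remains endperiodic rather than merely generalized endperiodic, which is used later in Proposition~\ref{prop_1}.
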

\begin{proof}
	For (1), let $e$ in $H_i'$ be an edge that is subdivided into two edges. By the same argument as in the proof of \cite[Lemma 5.1]{BH92}, we have $\lambda_i' = \lambda_i$. Furthermore, if $e$ is in a fundamental domain of an repelling end, we subdivide all the preimages of $e$ in the fundamental domains of the repelling end.

	For (2), recall that a valence-one homotopy is to remove the edge at a valence one vertex. It consists of a homotopy followed by a tightening and a collapsing of a pretrivial forest. If any edges in a fundamental domain of a repelling end is collapsed, we need to collapse all its preimages in the repelling end. For $l \neq r$, if $M_l$ is irreducible, then $M_l$ remain unchanged. If $M_l$ is a zero matrix, then the edges that are not collapsed in $H_l$ determines a new zero matrix. For $H_r$, the edges that are not collapsed in $H_r$ may determine one or more stata. If the original $H_r$ is exponentially growing and one of the new stratum is irreducible, then the transition matrix of this new stratum is a square submatrix of $M_r$, and the Perron-Frobenius eigenvalue is strictly smaller than the Perron-Frobenius eigenvalue of $H_r$ by the proof of \cite[Lemma 1.11]{BH92}.

	We now prove (3). If $l \neq i,j$, then similar to (2), if $M_l$ is irreducible, then $M_l$ remain unchanged. If $M_l$ is a zero matrix, then the edges that are not collapsed in $H_l$ determines a new zero matrix. If $i < j$, then $H_j$ is a zero matrix as the image $f(E_j)$ is contained in $G_i$ with $i<j$. The edges that are not folded or collapsed in $H_j$ determine a zero transition matrix. The matrix $H_i$ remains unchanged. Therefore $\Lambda(f'') = \Lambda(f)$ remains the same. If $i = j$ and if $H_i$ is not exponentially growing, then $\lambda_i$ remains the same. If $i = j$ and if $H_i$ is exponentially growing, then $\lambda_i$ remains the same if there is no tightening. Otherwise $\lambda_i$ will become strictly smaller due to the tightening operation, as the edges that are not tightened give a square submatrix of the original $M_i$ and the Perron-Frobenius eigenvalue strictly decreases again by the proof of \cite[Lemma 1.11]{BH92}. If $i=j=m$ and the two edges are in a fundamental domain of a repelling end, then we fold all their preimages in the repelling end. If $i=j=m$ and one of the edge is in a fundamental domain, then we collapse all the preimages of that edge in the repelling end.
\end{proof}

The next lemma analyzes the valence-two homotopy, which can be dangerous if $H_i = H_j$ is exponentially growing. See also \cite[Lemma 5.4]{BH92}.
\begin{lemma}
Suppose that $E_i \subset H_i$ and $E_j \subset H_j$ are two edges incident to a valence two vertex $v$ with $i \le j$. If $i=j$ and $H_i$ is exponentially growing, then up to relabeling, the eigenvector coefficient of $E_i$ is greater than or equal to that of $E_j$. Suppose $f' \colon G' \to G'$ is obtained from $f \colon G \to G$ by a homotopy across $E_i$. Then
	\begin{enumerate}
		\item if $H_i$ is not exponentially growing, then $\Lambda(f') = \Lambda(f)$;
		\item if $i<j$ and $H_i$ is exponentially growing, then $\Lambda(f') < \Lambda(f)$;
		\item if $i=j$ and $H_i$ is exponentially growing, then $\lambda_i$ is replaced by some number $\lambda'_{ij}$'s, each of which satisfies $\lambda'_{ij} \le \lambda_i$.
	\end{enumerate}
\end{lemma}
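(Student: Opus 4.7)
The approach is to mirror the proof of \cite[Lemma~5.4]{BH92}, tracking how the valence-two homotopy across $E_i$ modifies each transition matrix $M_k$ stratum by stratum, with the additional step of performing the move equivariantly along the orbit of $v$ whenever $v$ lies in a fundamental domain of a repelling end, so that $f'$ remains (generalized) endperiodic. After the move, the edge $E_i$ is collapsed and fused with $E_j$ into a single edge $E$, whose new image is the tightening of the concatenation $f(E_i)\cdot f(E_j)$; on every other edge $E_\ell$ one simply rewrites each occurrence of $E_i$ or $E_j$ in $f(E_\ell)$ as $E$. For the preliminary eigenvector statement, when $i=j$ and $H_i$ is irreducible and exponentially growing, Perron--Frobenius gives a strictly positive eigenvector of $M_i$ at eigenvalue $\lambda_i$; after swapping the labels $E_i\leftrightarrow E_j$ if necessary, I may assume that its $E_i$-coordinate dominates its $E_j$-coordinate.

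For case (1), if $H_i$ is not exponentially growing, then the only matrices $M_k$ altered by the move are $M_i$ and (when $i\neq j$) $M_j$, and neither of these becomes exponentially growing: when $i=j$ the merging of two edges in a non-exponentially-growing stratum cannot raise its spectral radius above $1$, while when $i<j$ the only change to $M_j$ is the appending of $f(E_i)\subset G_i\subset G_{j-1}$ to $f(E_j)$, which adds no entries to the $H_j$-part of $M_j$. Hence all irreducible exponentially growing strata are preserved and $\Lambda(f')=\Lambda(f)$. For case (2), with $i<j$ and $H_i$ exponentially growing, the same inclusion $f(E_i)\subset G_i\subset G_{j-1}$ shows $M_k$ is unchanged for every $k\neq i$, while $M_i$ loses the row and column indexed by $E_i$ and becomes a proper principal submatrix of the irreducible matrix $M_i$; by the proof of \cite[Lemma~1.11]{BH92} the Perron eigenvalue strictly drops, so $\Lambda(f')<\Lambda(f)$ in the lexicographic order.

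For case (3), with $i=j$ exponentially growing, the new matrix $M_i'$ on the reduced index set is obtained from $M_i$ by merging the rows and columns indexed by $E_i$ and $E_j$; after tightening, $M_i'$ may split into several irreducible blocks. Using the positive eigenvector $v$ of $M_i$ together with the inequality $v_{E_i}\ge v_{E_j}$, I construct a positive vector $v'$ on the new index set whose $E$-coordinate is chosen so that the coordinatewise inequality $M_i'v'\le\lambda_i v'$ holds; the subinvariance form of Perron--Frobenius then bounds the spectral radius of every irreducible block of $M_i'$, each of which is one of the $\lambda_{ij}'$'s, above by $\lambda_i$. The main obstacle will be choosing $v'(E)$ so that subinvariance holds \emph{simultaneously} at the row indexed by $E$ and at every row indexed by a surviving old edge: the naive guesses $v'(E)=v_{E_i}$ or $v'(E)=v_{E_j}$ each work at only one of the two kinds of rows, and the argument of \cite[Lemma~5.4]{BH92} resolves this by combining the inequality $v_{E_i}\ge v_{E_j}$ with the cancellations produced by the tightening step. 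A secondary task, specific to the infinite-type setting, is to verify that the equivariant version of the move performed near a repelling end does not produce a new exponentially growing finite stratum nor break the compatible-filtration structure of Definition \ref{ini_filtration}.
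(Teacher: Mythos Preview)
Your proposal is correct and follows essentially the same route as the paper's proof: both argue stratum by stratum, noting that $M_l$ is unchanged for $l<i$ and for $l>i$ with $l\neq j$, that the effect on $H_i$ when $i<j$ is that of a valence-one homotopy (so \cite[Lemma~1.11]{BH92} gives the strict drop), and that the $i=j$ case is handled by the subinvariant-vector argument of \cite[Lemma~1.13]{BH92}; both also perform the move equivariantly along a repelling end. The only cosmetic difference is that the paper invokes \cite[Lemma~1.13]{BH92} directly for case~(3), whereas you unpack the construction of $v'$ before deferring to \cite[Lemma~5.4]{BH92}, and the paper phrases the effect on $f$-images as ``erasing $E_i$ and $\bar E_i$'' rather than ``rewriting $E_i$ or $E_j$ as $E$''---these are the same operation viewed from the collapse side versus the merge side.
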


\begin{proof}
The valence-two homotopy move consists of a homotopy followed by a tightening, an inverse subdivision followed by a tightening and a collapse of a pretrivial forest. We first note that $\lambda_l$ will remain the same for $l < i$ as there is no effect on $G_l$. If $l>i$, $l \neq j$ and $H_l$ is exponentially growing, then $\lambda_l$ remains the same as well, as the move erases $E_i$ and $\overline{E}_i$ in the images $f(e)$ of edges $e$ in $H_l$ and does not change the transition matrix.

Suppose $i<j$ and $H_j$ is irreducible. Then $\lambda_j$ stays unchanged as the move erases $E_i$ and $\overline{E}_i$ in the images $f(e)$ of edges $e$ in $H_j$ with $e \neq E_j$ followed by a tightening, and there is no collapsing happening. The effect of the move on $H_i$ is the same as the valence-one homotopy. Therefore $\lambda_i$ is unchanged if $H_i$ is not exponentially growing and $\lambda_i$ is replaced by some $\lambda_{ij}$'s with $\lambda_{ij} < \lambda_i$ if $H_i$ is exponentially growing.

Suppose $i=j$ and  if $H_i$ is irreducible. By \cite[Lemma 1.13]{BH92}, if the matrix $M_i(2)$ for the map $f(2) \colon G(2) \to G(2)$ in the definition of valence-two homotopy is irreducible, then $\lambda'_i \le \lambda_i$. If $M_i(2)$ is not irreducible, then $H_i$ will split into strata $H_{ij}'$ in $G'$. The transition matrix $M_{ij}'$ of each stratum $H_{ij}'$ is a square submatrix of $M_i(2)$ and therefore $\lambda_{ij}' \le \lambda_i$.

If $i=m$ and $E_i$ is in a fundamental domain of a repelling end, then we collapse all the preimages of $E_i$ in the repelling end, as in the case of valence-one homotopy.
\end{proof}

Since the valence-two homotopy may be dangerous, we cannot require the graph $G'$ to have no valence-two vertices. As an alternative, we use the following definition and Lemma \ref{lem_bdd}.

\begin{definition} [\cite{BH92}]\label{def_bdd}
Let $L$ be the number of edges in $(G_{m-1}\backslash G_0) \cup \overline{H_m\backslash f^{-1}(H_m)}$. We say that $f:G\to G$ is {\it bounded} if there are at most $L$ exponentially growing strata $H_r$ and if for each such $H_r$, $\lambda_r$ is the Perron-Frobenius eigenvalue for some matrix with no more than $L$ rows and columns.
\end{definition}

\begin{remark} \label{rmk_lambda_min}
Since there are only finitely many possible values for $\lambda_i$'s, $\Lambda_{\min}$ exists among all the $\Lambda(f)$ with $f$ a bounded endperiodic map.
\end{remark}

\begin{lemma} \label{lem_bdd}
Suppose $f' \colon G' \to G'$ is obtained from a bounded endperiodic map $f:G \to G$ by a sequence of safe moves such that $\Lambda(f') < \Lambda(f)$. Then there exists a bounded endperiodic map $f'' \colon G'' \to G''$ such that $\Lambda(f'') < \Lambda(f)$.
\end{lemma}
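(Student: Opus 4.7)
The plan is to apply further safe moves to $f' \colon G' \to G'$ until the resulting map $f'' \colon G'' \to G''$ is bounded in the sense of Definition \ref{def_bdd}. Since safe moves never increase $\Lambda$, this will immediately yield $\Lambda(f'') \le \Lambda(f') < \Lambda(f)$, and the lemma will follow.

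The obstruction to $f'$ being bounded must come from excess structure in the finite part $G'_{m-1} \setminus G'_0$ together with the fundamental domain of the repelling end: either the number of exponentially growing strata or the size of some irreducible block of a transition matrix exceeds the constant $L'$ associated to $f'$. I would clean $f'$ up in three passes, each applied equivariantly across all deck-translated copies inside $H'_m$ so as to preserve the endperiodic structure. First, collapse any maximal invariant forest contained in the finite part; this leaves $\Lambda$ unchanged since a collapsed forest contributes trivially to all transition matrices. Second, perform valence-one homotopies at every valence-one vertex whose incident edge lies in a non-exponentially-growing stratum; by Lemma \ref{lemsafemv}(2) this is safe. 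Third, fold any pair of edges sharing an initial vertex with identical $f'$-images in a non-exponentially-growing stratum, or across strata at different levels; by Lemma \ref{lemsafemv}(3) these folds are safe.

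These three passes are iterated until none is applicable. The process terminates because each non-trivial move strictly reduces the integer counting the number of edges in the finite core, which is a non-negative quantity. At the end, the resulting map $f''$ has no invariant forest in its finite core, no removable valence-one vertices outside the exponentially growing strata, and no liftable folds of the safe types. Consequently, for each finite exponentially growing stratum $H''_r$, its transition matrix $M''_r$ has size at most the number of finite-core edges of $G''$, which is at most $L''$, and the number of such strata is likewise bounded above by that edge count. Hence $f''$ is bounded, and $\Lambda(f'') \le \Lambda(f') < \Lambda(f)$.

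The main obstacle I anticipate is verifying that the cleanup moves are consistent with the endperiodic structure near the repelling ends. Every collapse, valence-one homotopy, or fold that meets the fundamental domain must be applied simultaneously in all of its infinitely many deck-translated copies inside $H'_m$, and we must check that the result is still a generalized endperiodic graph map with the same compatible filtration (up to re-labeling). This is exactly the equivariance issue addressed case-by-case in the proof of Lemma \ref{lemsafemv}, and I would appeal to those arguments verbatim to conclude that $f''$ is indeed a bounded endperiodic map with $\Lambda(f'') < \Lambda(f)$.
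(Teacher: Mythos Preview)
Your approach has a genuine gap: you restrict yourself to \emph{safe} cleanup moves (forest collapses, valence-one homotopies in non-exponential strata, and folds across strata or in non-exponential strata), but none of these touch valence-two vertices lying inside an exponentially growing stratum. Such vertices are exactly what arise after subdivisions, and they are what make $f'$ fail to be bounded: the transition matrix of an exponentially growing stratum that has been repeatedly subdivided can have arbitrarily many rows and columns. Your three passes simply do not apply to these vertices, so the iteration you describe can terminate with the edge count in the exponential strata still much larger than $L$. Your final sentence ``$M''_r$ has size at most the number of finite-core edges of $G''$, which is at most $L''$'' reads the definition of bounded circularly: if $L''$ is computed from $G''$ itself, every map is trivially bounded and the lemma has no content. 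The point of boundedness is that the eigenvalues come from matrices of size controlled by the \emph{original} $L$, so that only finitely many values of $\Lambda$ are possible (Remark~\ref{rmk_lambda_min}).

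The paper's proof, following \cite[Lemma~5.5]{BH92}, confronts this directly by using the \emph{dangerous} valence-two homotopy. The crucial observation is that valence-two vertices in $G'$ must have both edges in the same stratum, and the only ones that need removal sit in \emph{new} strata, i.e.\ strata of $G'$ that were not already strata of $G$. For such a stratum the eigenvalue is some $\lambda'_{ij}\le\lambda_i$ that arose in the course of the safe moves, and Lemma preceding this one (the valence-two analysis) guarantees that a valence-two homotopy there produces eigenvalues still bounded by $\lambda_i$. Hence $\Lambda$ may increase from $\Lambda(f')$ under the homotopy, but it cannot climb back up to $\Lambda(f)$. After these homotopies the new strata each have at most $L$ edges and there are at most $L$ strata, so $f''$ is bounded with $\Lambda(f'')<\Lambda(f)$. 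The willingness to use a non-safe move, together with the control on how much it can hurt, is the heart of the argument and is missing from your proposal.
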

\begin{proof}
The proof follows that of \cite[Lemma 5.5]{BH92}. The idea is to modify $f'$ so that it becomes bounded and $\Lambda(f')$ will not increase too much to be bigger than $\Lambda(f)$. We first note that $G'$ contains no valence-one vertices and if $v$ is a valence-two vertex in $G'$, both edges incident to $v$ lie in the same stratum in $G'$. If $v$ and both the edges are contained in a new stratum of $G'$, i.e., a stratum of $G'$ which is not a stratum of $G$, then we apply a valence-two homotopy move at $v$. If the edge $E_i$ incident to $v$ is in a fundamental domain of a repelling end, then we collapse all the preimages of $E_i$ in the repelling end. We obtain $f'' : G'' \to G''$ with $\Lambda(f'') < \Lambda(f)$. The filtration for $f''$ has at most $L$ strata and each new stratum has at most $L$ edges. Therefore $f''$ is bounded.
\end{proof}

Therefore, we obtain the following conclusion.

\begin{proposition} \label{prop_1}
If $f \colon G\to G$ is a generalized endperiodic (resp. endperiodic) map, then $f$ is homotopic to a generalized endperiodic (resp. endperiodic) map with $\Lambda = \Lambda_{\min}$.
\end{proposition}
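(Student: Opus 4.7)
The plan is to argue by finite descent on the lexicographically ordered invariant $\Lambda$, modeled on the Bestvina--Handel algorithm of \cite[Section 5]{BH92} and adapted to the endperiodic setting.

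First I would reduce to the bounded case. Starting from $f$, applying the safe moves of Section \ref{sec_moves} (subdivision, valence-one and valence-two homotopies, folding) arranges for $f$ to be bounded in the sense of Definition \ref{def_bdd}; by Lemma \ref{lemsafemv} and the preceding valence-two analysis, none of these moves increases $\Lambda(f)$. By Remark \ref{rmk_lambda_min}, the set of values $\Lambda(f')$ attained by bounded generalized endperiodic $f'$ is finite, so the lexicographic minimum $\Lambda_{\min}$ exists.

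Next I would perform the descent step. Assume $f$ is bounded with $\Lambda(f)>\Lambda_{\min}$. Following Bestvina--Handel, I would examine the highest exponentially growing stratum $H_r$ in the compatible filtration of Definition \ref{ini_filtration} and locate an obstruction to $f$ being efficient on $H_r$: typically an illegal turn in $H_r$, a valence-one vertex adjacent to $H_r$, or two edges at a common vertex whose $f$-images share a nontrivial initial segment. Each such obstruction yields a short sequence of safe moves (a fold possibly followed by a tightening, a valence-two homotopy, and a collapse of a pretrivial subforest) that, by Lemma \ref{lemsafemv}(3) together with the valence-two homotopy analysis, strictly decreases $\lambda_r$ and hence $\Lambda(f)$ lexicographically. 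Lemma \ref{lem_bdd} then replaces the resulting map by a bounded generalized endperiodic $f''$ with $\Lambda(f'')<\Lambda(f)$. Iterating and invoking finiteness of the value set, the procedure halts at a bounded generalized endperiodic map realizing $\Lambda_{\min}$.

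To obtain the endperiodic (rather than merely generalized endperiodic) case, the only additional care needed is to apply each move $f$-equivariantly inside each periodic end: a fold, subdivision, valence-one or valence-two homotopy performed in one fundamental domain of a repelling end must be copied in every $f$-preimage of that domain in the end. This equivariance is already built into the proofs of Lemma \ref{lemsafemv} and Lemma \ref{lem_bdd} (which explicitly instruct one to propagate each local modification to all preimages in the end), so a homeomorphism near the ends remains a homeomorphism sending edges to edges, and endperiodicity is preserved throughout.

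The main obstacle will be verifying rigorously that whenever $\Lambda(f)>\Lambda_{\min}$ some beneficial safe move is available: one has to supply a detailed combinatorial case analysis on $H_r$, mirroring that of \cite[Section 5]{BH92}, with additional bookkeeping near the ends to keep track of the equivariant propagation and to ensure that the moves do not accidentally inflate the lower strata $\lambda_s$ for $s<r$.
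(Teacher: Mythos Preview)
Your descent argument via safe moves and the finiteness in Remark~\ref{rmk_lambda_min} is exactly the paper's approach; the paper's proof simply cites Lemma~\ref{lem_bdd} and Remark~\ref{rmk_lambda_min} without spelling out the obstruction-finding step you describe. Two small corrections: the initial $f$ with its compatible filtration is already bounded in the sense of Definition~\ref{def_bdd} (the finite strata all lie in $(G_{m-1}\setminus G_0)$, which has at most $L$ edges), so no preliminary moves are needed to achieve boundedness; and not every obstruction strictly decreases $\lambda_r$---a fold with no subsequent tightening in $H_r$ leaves $\lambda_r$ unchanged, so the descent relies on Lemma~\ref{lem_bdd} combining several such moves into one bounded step.

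The genuine difference is in the endperiodic case. You propose propagating each move $f$-equivariantly through the periodic ends, pointing to the propagation clauses in the proofs of Lemma~\ref{lemsafemv} and Lemma~\ref{lem_bdd}. The paper instead observes that for an \emph{endperiodic} map all the moves needed to reduce $\Lambda$ take place in a compact subgraph disjoint from the periodic neighborhoods of the ends: $\Lambda$ records only the finite exponentially growing strata, which lie inside $\overline{G'\setminus G_0}$, and the periodic neighborhoods---where $f$ is already an edge-to-edge homeomorphism---are never touched. The propagation clauses you cite are there for the \emph{generalized} endperiodic case; in the endperiodic case they are simply not triggered. Your equivariant-propagation argument would also work, but it is more elaborate than necessary and requires the extra check (which you assert but do not carry out) that an equivariant fold or collapse near a periodic end still leaves a homeomorphism sending edges to edges.
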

\begin{proof}
If $f \colon G\to G$ is a generalized endperiodic map,
the proposition follows from Lemma \ref{lem_bdd} and Remark \ref{rmk_lambda_min}. If $f \colon G\to G$ is endperiodic, since all the moves are performed in a compact subgraph disjoint from the ends, the final map with $\Lambda(f)$ minimal is endperiodic as well.
\end{proof}

\subsection{Train track maps and relative train track maps}\label{sec_4.2}

\begin{proposition}\label{prop_2}
If $f \colon G\to G$ is a generalized endperiodic (resp. endperiodic) map with $\Lambda = \Lambda_{\min}$, then $f$ is homotopic to a generalized endperiodic (resp. endperiodic) relative train track map.
\end{proposition}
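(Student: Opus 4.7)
The plan is to adapt the Bestvina--Handel relative train track argument from \cite[Section 5]{BH92} to our endperiodic setting. Starting from $f \colon G \to G$ with $\Lambda(f) = \Lambda_{\min}$, equipped with a compatible filtration $\emptyset = G_{-1} \subset G_0 \subset \cdots \subset G_m = G$, I would verify each of the three conditions (1), (2), (3) of Definition \ref{def_reltt} for every exponentially growing finite stratum $H_r$. If some condition fails at such an $H_r$, the strategy is to exhibit a finite sequence of safe moves producing a bounded (generalized) endperiodic map with strictly smaller $\Lambda$, contradicting minimality. The filtration clause (1) of Definition \ref{def_reltt} is automatic from the construction of the compatible filtration in Definition \ref{ini_filtration}, and irreducibility of exponentially growing strata is built in by that construction.

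First I would address condition (1). Suppose some oriented edge $E \subset H_r$ has $Df(E)$ lying in $G_{r-1}$, so that $f(E)$ begins with a nontrivial subpath in $G_{r-1}$. Following \cite[Section 5]{BH92}, I would subdivide $E$ at the point where its image first enters $H_r$ and then perform a valence-two homotopy that slides the initial $G_{r-1}$-segment off $H_r$. The stratum $H_r$ then loses an edge from $M_r$, and since $M_r$ is irreducible, the resulting transition matrix is a proper square submatrix whose Perron--Frobenius eigenvalue is strictly smaller by \cite[Lemma 1.11]{BH92}, contradicting $\Lambda(f) = \Lambda_{\min}$.

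Next, for condition (2), suppose some nontrivial path $\alpha \subset G_{r-1}$ with endpoints in $H_r \cap G_{r-1}$ has $[f(\alpha)]$ either trivial or with endpoints outside $H_r \cap G_{r-1}$. Choosing $H_r$-edges $E_1, E_2$ incident to the endpoints of $\alpha$, one finds nontrivial cancellation when tightening $f(E_1 \alpha E_2)$. As in \cite[Lemma 5.11]{BH92}, this cancellation can be realized by folding two $H_r$-edge occurrences with common initial segments of their $f$-images; the accompanying tightening takes place inside $H_r$, and Lemma \ref{lemsafemv}(3) then yields that $\lambda_r$ strictly decreases, again contradicting minimality. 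For condition (3), if some legal path $\beta = e_1 e_2 \subset H_r$ has $f(\beta)$ containing an illegal turn in $H_r$, then iterating $Tf$ on $\{e_1, e_2\}$ eventually produces a degenerate turn at some step, which can be traced back to a pair of oriented edges in $H_r$ whose $f$-images share a common initial segment. Folding this pair and tightening inside $H_r$ again strictly decreases $\lambda_r$ by Lemma \ref{lemsafemv}(3), contradicting minimality.

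The main obstacle will be verifying that each such modification stays within the finite subgraph $\overline{G_{m-1} \setminus G_0}$, so that the infinite strata $H_0$ and $H_m$ of escaping and backward escaping edges are not disturbed and the (generalized) endperiodic structure is preserved. Since the failures of (1), (2), (3) occur at finite exponentially growing strata $H_r$ with $0 < r < m$ and the edges involved lie in $\overline{G_{m-1} \setminus G_0}$, these moves are compatible with the compatible filtration. In the endperiodic case, as observed in the proof of Proposition \ref{prop_1}, all moves happen in a compact subgraph disjoint from the periodic neighborhoods of the ends, so the resulting map is again endperiodic.
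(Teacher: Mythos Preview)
Your strategy of deducing all three relative train track conditions directly from $\Lambda = \Lambda_{\min}$ by contradiction does not work for conditions (1) and (2); this is exactly why \cite{BH92} (and the paper) instead use $\Lambda$-\emph{preserving} modifications. For condition (1): after you subdivide $E = E'E''$ with $f(E') \subset G_{r-1}$ and move $E'$ down into $G_{r-1}$, the new $H_r$ has $E''$ in place of $E$, and the transition matrix $M_r$ is unchanged (every crossing of $E$ by an $f$-image is a crossing of $E''$, and $f(E'')$ has the same $H_r$-content as $f(E)$). There is no proper square submatrix and no drop in $\lambda_r$. The correct move is the \emph{core subdivision} of \cite[Lemma~5.13]{BH92}, which is neutral for $\Lambda$ but arranges that $Df$ preserves the oriented edges of $H_r$. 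Similarly for condition (2): if $[f(\alpha)]$ is trivial, the cancellation in $f(E_1\alpha E_2)$ is at the \emph{ends} of $f(E_1)$ and $f(E_2)$, not at their initial segments, so there is no pair of $H_r$-edges with a common initial $f$-image to fold, and your invocation of Lemma~\ref{lemsafemv}(3) does not apply. One instead uses \emph{collapsing inessential connecting paths} as in \cite[Lemma~5.14]{BH92}, another neutral move that reduces $|H_r \cap G_{r-1}|$ and hence terminates.

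What does hold is that minimality of $\Lambda$ forces condition (3) once (1) is in place: this is \cite[Lemma~5.9]{BH92}, and that part of your outline is correct in spirit. The paper's proof follows the standard Bestvina--Handel route: first establish (1) by core subdivision (preserving $\Lambda_{\min}$), then (2) by collapsing inessential connecting paths (also preserving $\Lambda_{\min}$), and then (3) is automatic from \cite[Lemma~5.9]{BH92}. Your observation that all of these operations are confined to the finite subgraph $\overline{G_{m-1}\setminus G_0}$, and hence preserve the (generalized) endperiodic structure, is correct and is the only genuinely new point beyond \cite{BH92}.
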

\begin{proof}
Since $f:G\to G$ is a generalized endperiodic map with $\Lambda = \Lambda_{\min}$, by \cite[Lemma 5.9]{BH92}, if $f$ satisfies Definition \ref{def_reltt} (1), then it satisfies Definition \ref{def_reltt} (3). Therefore we modify $f$ and $G$ so that Definition \ref{def_reltt} (1) and (2) hold.

Starting from $H_1$ and working inductively, if $H_i$ does not satisfy Definition \ref{def_reltt} (1), then apply the {\em core subdivision} to $H_i$ as in \cite[Lemma 5.13]{BH92} to obtain an endperiodic map $f(1) : G(1) \to G(1)$ with $\Lambda(f(1)) = \Lambda(f) = \Lambda_{\min}$ such that $H_i(1)$ satisfies Definition \ref{def_reltt} (1). The ``core subdivision'' procedure is done as follows: for every edge in $H_i$, let the {\em core} be all the interior points in $H_i$ whose forward orbit does not escape to $G_{i-1}$. Consider the smallest interval on this edge that contains the core, if an end point of this interval is not a vertex, make it a vertex (and hence subdivided the edge), and set $G_{i-1}$ to also include the segment not in the core.

Now if it fails to satisfy Definition \ref{def_reltt} (2), apply {\em collapsing inessential connecting paths} to $H_i$ as in \cite[Lemma 5.14]{BH92}. The ``collapsing inessential connecting paths'' procedure is done as follows: If some $[f(\alpha)]$ is trivial, carry out a sequence of folds to eliminate it, and collapsing the pretrivial forest. This procedure will result in a decrease of the number of points in $H_r\cap G_{r-1}$, hence such $\alpha$ can be eliminated after finitely many such operations.
Note that both $H_0$ and $H_m$ are non exponentially growing.

If $f \colon G\to G$ is endperiodic, since the core subdivision and collapsing inessential connecting paths are performed in a compact subgraph disjoint from the ends, the final map is endperiodic as well.

The proof is completed.
\end{proof}

\begin{proof}[Proof of Theorem \ref{thm_traintrack}]
If $f \colon G\to G$ is generalized endperiodic, the first sentence of the theorem follows from Propositions \ref{prop_1} and \ref{prop_2}, and the fact that all the operations in the proof of those propositions are carried out in a compact subgraph so that the conjugation to the relative train track is via combinatorially bounded homotopy equivalences. Moreover, if $f \colon G\to G$ is furthermore endperiodic, we note that all the operations in the proofs of Propositions \ref{prop_1} and \ref{prop_2} are carried out in a compact subgraph so that they would not destroy sufficiently small periodic ends. Therefore the conclusion for endperiodic maps follows.

Now to prove the second sentence of the theorem, we only need to show that from an endperiodic relative train track map with a single finite strata which is exponentially growing, one can obtain a train track map via finitely many moves as defined in Section \ref{sec_moves}. By collapsing pretrivial forests we can assume that no edge would be sent to a single point, hence the only obstruction to being train track is that $f^n$ might send an edge to a path with backtrackings. Let $f \colon G\rightarrow G$ be such a map, $G'$ be $G_1$ deleting its intersection with the union of periodic neighborhoods of repelling ends. Hence $f(G)\subseteq G$, and edges in $G'$ are either in the exponentially growing strata or in $H_0$, and by the assumption of $f$ being endperiodic, all but finitely many of those in $H_0$ are in periodic ends of attracting ends. By Definition \ref{def_reltt}, we know that if the iterated forward image of some edge of $G'$ has backtracking, the backtracking itself must appear in $H_0$. By definition of end periodicity, the illegal turn that get sent to the backtracking cannot be in the periodic neighborhoods of the attracting ends. We define the {\em height} of an illegal turn in $G'$ as the number of iterates of $f$ needed to get into the periodic neighborhoods, then each backtracking is associated with an illegal turn with finite, positive height. Carry out the folding, tightening and collapsing pretrivial forest process as in Lemma \ref{lemsafemv} Part (3) to the illegal turns associated with backtracking with the lowest height, then the lowest possible height of such turns would increase. After finitely many such operations one can remove backtracking in the forward iterated images of edges entirely in $G'$. Now subdivide the edges in $G\backslash G'$ such that $f$ sends each edge there to a single edge, and we get an endperiodic train track map.
\end{proof}

\subsection{An example}\label{ex2}
To illustrate the algorithm described in the proof of Theorem \ref{thm_traintrack}, we consider the following example, which is based on \cite[Example 4.13]{CCF21}.

Let $G$ be an infinite graph with two ends illustrated as below in Figure \ref{fig:example0}:

\begin{figure}[H]
    \centering
    \begin{tikzpicture}[scale=1.5]
    \draw[-](0.5, 0)--(5.5, 0);
    \draw[-](0.5, 1)--(5.5, 1);
    \draw[-](1, 1)--(1, 0);
    \draw[-](2, 1)--(2, 0);
    \draw[-](3, 1)--(3, 0);
    \draw[-](4, 1)--(4, 0);
    \draw[-](5, 1)--(5, 0);
    \node at (3.2, 0.5) {$a_0$};
    \node at (2.2, 0.5) {$a_{-1}$};
    \node at (1.2, 0.5) {$a_{-2}$};
    \node at (4.2, 0.5) {$a_1$};
    \node at (5.2, 0.5) {$a_2$};

    \node at (0.5, 1.2) {$b_{-3}$};
    \node at (1.5, 1.2) {$b_{-2}$};
    \node at (2.5, 1.2) {$b_{-1}$};
    \node at (3.5, 1.2) {$b_0$};
    \node at (4.5, 1.2) {$b_1$};
    \node at (5.5, 1.2){$b_2$};

    \node at (0.5, -.2) {$c_{-3}$};
    \node at (1.5, -.2) {$c_{-2}$};
    \node at (2.5, -.2) {$c_{-1}$};
    \node at (3.5, -.2) {$c_0$};
    \node at (4.5, -.2) {$c_1$};
    \node at (5.5, -.2){$c_2$};
    \draw[dashed](0, 0)--(0.5, 0);
    \draw[dashed](0, 1)--(0.5, 1);\draw[dashed](5.5, 0)--(6, 0);\draw[dashed](5.5, 1)--(6, 1);
    \end{tikzpicture}
    \caption{Graph for Example in Section \ref{ex2}}
    \label{fig:example0}
\end{figure}
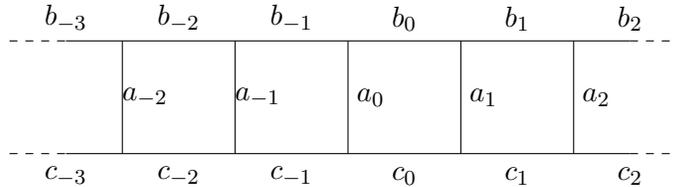

Let $g$ be a shift to the right, sending $a_i$, $b_i$, $c_i$ to $a_{i+1}$, $b_{i+1}$ and $c_{i+1}$ respectively. Let $\tau$ be a map that sends $a_0$ to the path $b_{-1}a_{-1}c_{-1}c_0a_1b_0a_0c_{-1}a_{-1}b_{-1}b_0a_1c_0$ and fixing all other edges. Consider the map $g\circ \tau : G \to G$. In this case $m=3$ for the filtration. $H_1$ consists of $b_i$, $c_i$ where $i\geq -1$, and $a_i$ where $i\geq 0$; the exponential component $H_2$ consists of $a_{-1}$, and $H_3$ consists of $a_i$ where $i\leq -2$, and $b_i$, $c_i$ where $i\leq -2$. Definition \ref{def_reltt} (1) is not satisfied so we need to do a core subdivision for $a_{-1}$, then both (1) and (3) holds. One can further verify that (2) holds as well. So the only exponential component is $H_1$ which consists of a single interval, and $\lambda_2=2$.

\section{Proof of Theorem \ref{thm_1.2}} \label{sec_thm1.2}
In this section, we give a proof of Theorem \ref{thm_1.2}.
Recall that for a finite subgraph $G'$ of $G$, the term $l_w(P_{G'}(\cdot))$ is defined as follows. Collapsing all edges in its complement $G \setminus G'$, we get a finite graph $G_1$ and a group homomorphism $P_{G'} \colon \pi_1(G)\rightarrow \pi_1(G_1)$. We denote by $l_w$ the word length on the finitely generated free group $\pi_1(G_1)$. Since $G_1$ is a finite graph, $l_w(P_{G'}(\cdot))$ is bounded from both above and below by a constant multiple of the {\it bounded combinatorial length} $l_{G'}(\cdot)$ that we define now.
For each conjugacy class $\mathfrak{c}$ in $\pi_1(G)$, we define $l_{G'}(\mathfrak{c})$ to be the number of edges in $\gamma \cap G'$, where $\gamma$ is a shortest (tight) loop $\gamma$ in $G$ representing $\mathfrak{c}$.

Therefore, to prove Theorem \ref{thm_1.2}, it is equivalent to use $l_{G'}(\cdot)$ rather than $l_w(P_{G'}(\cdot))$. The proof is based on the following lemma.
Recall Definition \ref{def:cbhe}.

\begin{lemma}\label{bdcomb}
Let $f: G\rightarrow H$ be a combinatorially bounded homotopy equivalence. Let $\mathfrak{c}$ be a conjugacy class in $\pi_1(G)$, $G'$ a finite subgraph of $G$. Then there is some $L>0$ such that $l_{f(G')}(f_*(\mathfrak{c}))\leq L \cdot l_{G'}(\mathfrak{c})$.
\end{lemma}

If $G$ is a graph, by a {\em tight loop} we mean a loop that is the shortest among its homotopy class, i.e., a loop with no backtracking. By {\em tightening} of a loop we mean replacing a loop with the shortest loop in its homotopy class.

\begin{proof}[Proof of Lemma \ref{bdcomb}]
Let $\gamma$ be the tight loop representing $\mathfrak{c}$ and $\gamma'$ be the tightening of $f(\gamma)$. Then by Definition \ref{def:cbhe}, the number of edges of $\gamma'$ in $f(G')$ is no more than the number of edges of $f(\gamma)$ in $f(G')$, which is no more than $L$ times the number of edges of $\gamma$ in $G'$.
\end{proof}

Now we prove Theorem \ref{thm_1.2}.

\begin{proof}[Proof of Theorem \ref{thm_1.2}]
Let $F\colon G_t\rightarrow G_t$ be the relative train track representation of $f \colon G \rightarrow G$, $h\colon G\rightarrow G_t$ a homotopy equivalence represented by combinatorially bounded map, such that $h\circ f$ and $F \circ h$ are homotopic, $h'$ the homotopy inverse of $h$.

We prove Statement (1). If $\lambda(f)=0$, by Definition \ref{def_reltt}, then every edge in $G_t$ will be escaping under $F$. This is because if a finite matrix with non negative entries has spectral radius $0$, then it becomes $0$ after taking finite power. Let $G'$ be any compact subgraph of $G$, $\gamma$ any loop, $\mathfrak{c}$ the conjugacy class represented by $\gamma$. Then by Lemma \ref{bdcomb}, we have $$l_{G'}(f_*^n(\mathfrak{c}))\leq L \cdot l_{h'^{-1}(G')}(f_*^n(h_*(\mathfrak{c})))$$ for some $L>0$. We note that there exists $n \in \mathbb{N}$ such that the right hand side equals $0$ when $n' \ge n$. Therefore the conclusion follows.

Now we prove Statement (2). By Lemma \ref{bdcomb}, it suffices to show that:
\begin{equation}\label{eq_last}
\log(\lambda(F))=\sup_{H'\subseteq G_t\text{ compact},~ \mathfrak{c} \text{ conjugacy class of }\pi_1(G_t)}\limsup_{n\rightarrow\infty}\frac{\log(l_{H'}(F_*^n(\mathfrak{c})))}{n}.
\end{equation}

\begin{enumerate}
\item To show $\geq$, note that by definition, $\lambda(F)$ is the maximum among the spectral radii of the matrices $M_{H'}=[m_{i, j}]$, where
\[m_{i, j} \defeq \text{ number of times }f(j\text{-th edge in }H')\text{ passes through }i\text{-th edge in }H'\]
and $H'$ goes through all compact subgraphs of $G'$. Therefore the conclusion follows.
\item Suppose $\lambda(F)=\lambda_i$. Set $H'=H_i$ and $\mathfrak{c}$ to be represented by a non-trivial loop $\gamma$ in $G_i$ which passes through at least one edge in $H_i$. By Definition \ref{def_reltt}, $$l_{H'}(F_*^n(\mathfrak{c}))=[1 \dots 1](M_i)^n x_\gamma$$ where $x_\gamma$ is an $n \times 1$ vector whose $j$-th entry is the number of times $\gamma$ passes through the $j$-th edge in $H'=H_i$ in either directions. Hence the exponential growth rate of $l_{H'}(F_*^n(\mathfrak{c}))$ is the spectral radius of $M_i$, which is $\lambda_i=\lambda(f)$. Therefore $\log(\lambda(F))$ is less than the supremum on the right hand side of \eqref{eq_last}. Moreover, the supremum is realized by $H'$ and $\mathfrak{c}$.
\end{enumerate}
This completes the proof of the theorem.
\end{proof}

\section{Proof of Theorem \ref{thm_1.4}} \label{sec_pf_thm1.4}
Recall from Remark \ref{rmk_topentropy} that the topological entropy $h_{\rm top}(f)$ is well-defined for generalized endperiodic maps.
\begin{lemma} \label{lem_2.13}
Let $f \colon G \to G$ be a generalized endperiodic relative train track map with $\lambda(f)\ge 1$ and $\{\ell_i\}$ be defined as in Definition \ref{l_idef}. Then we have $\log\lambda(f) = h(\pi_1(G),f,\{\ell_i\}) = h_{\rm top}(f)$.
\end{lemma}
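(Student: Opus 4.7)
The plan is to establish the chain
\[\log\lambda(f) \;\le\; h(\pi_1(G),f,\{\ell_i\}) \;\le\; h_{\rm top}(f) \;\le\; \log\lambda(f),\]
whose middle inequality is Lemma \ref{lem_2.130}; it remains to prove the outer two. The first equality is essentially a translation of Theorem \ref{thm_1.2}(2) into the language of the metrics $\ell_i$, and I expect no serious difficulty there. The main work is the last inequality, which I would extract from a Lipschitz-metric construction on $\overline{G}$ together with a standard volume-growth estimate.

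For the equality $\log\lambda(f)=h(\pi_1(G),f,\{\ell_i\})$, I would apply Theorem \ref{thm_1.2}(2) twice. For the upper bound, fix $i$ and any conjugacy class $\mathfrak{c}$; since $\ell_i$ assigns length $1$ to edges of $G_i$ and $0$ elsewhere, one has $\ell_i(f^n(\mathfrak{c}))=l_{G_i}(f^n(\mathfrak{c}))$, which is comparable up to a multiplicative constant to $l_w(P_{G_i}(f_*^n(\mathfrak{c})))$ because the quotient graph defining $P_{G_i}$ is finite. Applying Theorem \ref{thm_1.2}(2) with $G'=G_i$ gives $\limsup_n \tfrac{1}{n}\log\ell_i(f^n(\mathfrak{c}))\le\log\lambda(f)$; taking supremum over $\mathfrak{c}$ and limit in $i$ yields $h(\pi_1(G),f,\{\ell_i\})\le\log\lambda(f)$. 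For the lower bound (in the nontrivial case $\lambda(f)>1$; the case $\lambda(f)=1$ is subsumed by the previous step), Theorem \ref{thm_1.2}(2) supplies a finite $G'\subseteq G$ and $\mathfrak{c}$ realizing the supremum; choosing $i$ with $G'\subseteq G_i$ and using $l_w(P_{G'}(\cdot))\asymp l_{G'}(\cdot)\le l_{G_i}(\cdot)=\ell_i(\cdot)$ gives $h(\pi_1(G),\ell_i)\ge\log\lambda(f)$, and letting $i\to\infty$ completes the bound. Combining with Lemma \ref{lem_2.130} yields $\log\lambda(f)\le h_{\rm top}(f)$.

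The main obstacle is the remaining inequality $h_{\rm top}(f)\le\log\lambda(f)$. My plan is to equip $\overline{G}$ with a Lipschitz metric $d$ built from the filtration: on each exponentially growing finite stratum $H_r$ assign edge lengths proportional to the entries of the Perron-Frobenius eigenvector of $M_r$ so that $f$ expands $H_r$ by at most $\lambda_r\le\lambda(f)$; on the other finite strata assign any positive lengths (their stretch contribution being subexponential by the relative train track property); and on the infinite strata $H_0$ and $H_m$, exploit that every edge is forward or backward escaping to assign geometrically decaying lengths making $f$ Lipschitz on the one point compactification $\overline{G}$ with constant at most $\lambda(f)+\epsilon$ for any prescribed $\epsilon>0$. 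A standard volume-growth estimate for Lipschitz maps on a compact metric space of finite box dimension then bounds the size of any $(n,\delta)$-separated set by $C_\delta(\lambda(f)+\epsilon)^n$, giving $h_{\rm top}(f)\le\log(\lambda(f)+\epsilon)$ and, on sending $\epsilon\to 0$, the claim. The delicate step is to arrange the edge lengths across strata and near the ends so that the global Lipschitz constant is dominated by the top stratum stretch $\lambda(f)$, rather than being inflated by lower-strata interactions or by shift-like dynamics near the ends; condition (2) of Definition \ref{def_reltt} (legality of mixed turns and nontriviality on lower strata) is the key structural input that makes this control possible.
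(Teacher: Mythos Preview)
Your first step, establishing $\log\lambda(f)=h(\pi_1(G),f,\{\ell_i\})$ via Theorem \ref{thm_1.2}(2), matches the paper's argument essentially line for line.

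For the inequality $h_{\rm top}(f)\le\log\lambda(f)$, the paper takes a different and shorter route: it invokes the variational principle $h_{\rm top}(f)=\sup_\mu h_\mu(f)$ on $\overline{G}$, observes that any $f$-invariant Borel probability measure is either supported at the added point at infinity (contributing zero entropy) or on a compact $f$-invariant subset of $G$, and then identifies the supremum over such compactly supported measures with $\log\lambda(f)$ via the finite relative train track theory. This sidesteps any metric construction.

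Your Lipschitz/volume-growth route is plausible but more labor-intensive, and one point deserves care. Definition \ref{def_reltt} imposes \emph{no} structural conditions on the non-exponentially growing finite strata, so the claim that their stretch is ``subexponential by the relative train track property'' is not literally justified by that definition; it is true because their transition matrices have spectral radius at most $1$, but converting this into an honest Lipschitz bound $\lambda(f)+\epsilon$ requires scaling each stratum much shorter than the ones above it so that lower-strata excursions of $f(e)$ are negligible. A similar balancing act is needed near the repelling ends, where finite total length forces decaying edge lengths while $f$ shifts toward the core, so the local Lipschitz constant there is $>1$ and must be pushed close to $1$ by choosing the decay rate near $1$. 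All of this can be made to work, but it is exactly the ``delicate step'' you flag, and the variational-principle argument avoids it entirely.
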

\begin{proof}
We first show that $\log \lambda(f) = h(\pi_1(G),f,\{\ell_i\})$. By the proof of Theorem \ref{thm_1.2},
\[\log\lambda(f)=\limsup_{n\rightarrow\infty}\frac{\log(l_{G'_\diamond}(f_*^n(\mathfrak{c}_\diamond)))}{n}\]
where $G_\diamond'$ and $\mathfrak{c}_\diamond$ are the ones realizing the supremum. Let $I \in \mathbb{N}$ be such that the subgraph $G_I$ as in Definition \ref{l_idef} contains $G'_\diamond$. Then for any $i \ge I$, by definition of $\{\ell_i\}$ (as in Definition \ref{l_idef}), we have
$$\limsup_{n\rightarrow\infty}\frac{\log(l_{G'_\diamond}(f_*^n(\mathfrak{c}_\diamond)))}{n} = \limsup_{n\rightarrow\infty}\frac{\log(\ell_i(f^n(\mathfrak{c}_\diamond)))}{n}.$$
In particular, the supremum in the definition of $h(\pi_1(G),f,\{\ell_i\})$ is realized by $\mathfrak{c}_\diamond$. Hence the conclusion follows.

Now we show that $h_{\rm top}(f) = h(\pi_1(G),f,\{\ell_i\})$. Again, when consider $h_{\rm top}(f)$, we consider the compactification of $G$ as in Remark \ref{rmk_topentropy}. We use the variational principle $h_{\rm top}(f) = \sup_{\mu}h_{\mu}(f)$ where the supremum is taken over all the invariant Borel probability measures on $\overline{G}$. For each such measure $\mu$, it is either supported on the point at the ends, or on a compact subset of $G$. Since there is no entropy at the end point of $G$, we are only interested in the case where $\mu$ is supported on a compact subset of $G$. In this case, $\sup_{\mu}h_{\mu}(f) =h_{\rm mme}(f) = \log \lambda(f)$. The conclusion follows from the variational principle and the first part of the lemma.



\end{proof}

\begin{lemma} \label{lem_6.1}
Let $f_1 \colon G_1 \to G_1$ and $f_2 \colon G_2 \to G_2$ be generalized endperiodic maps that are proper homotopy equivalent to each other. Let $\{\ell_i\}$ (resp. $\{\ell_i'\}$) be defined as in Definition \ref{l_idef} on $G_1$ (resp. $G_2$).
Then we have $h(\pi_1(G_1),f_1,\{\ell_i\}) = h(\pi_1(G_2),f_2,\{\ell_i'\})$.
\end{lemma}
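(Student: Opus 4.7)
The plan is to invoke Remark~\ref{rmk_2.11} after using the proper homotopy equivalence to transport the sequence $\{\ell'_j\}$ on $G_2$ to a comparable sequence on $G_1$, via the bijection $\phi_*$ on conjugacy classes. I would fix mutually inverse proper homotopy equivalences $\phi \colon G_1 \to G_2$ and $\psi \colon G_2 \to G_1$ satisfying $\phi \circ f_1 \simeq f_2 \circ \phi$ via a proper homotopy; by cellular approximation, both $\phi$ and $\psi$ may be taken to be cellular. This yields $\phi_* \circ f_{1*}^n = f_{2*}^n \circ \phi_*$ as bijections on conjugacy classes for every $n$.

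For each $i$, since $\psi$ is proper, $\psi^{-1}(G_i) \subseteq G_2$ is compact and hence meets only finitely many edges of $G_2$; let $K_i$ be the finite subgraph of $G_2$ consisting of those edges, and pick $j = j(i)$ large enough that $K_i \subseteq G'_j$. Set
\[M_i \defeq \max\{|\psi(e')| : e' \text{ is an edge of } K_i\},\]
a finite constant. The key step is the length comparison
\begin{equation*}
\ell_i(\mathfrak{c}) \le M_i \cdot \ell'_j(\phi_*(\mathfrak{c})) \quad \text{for every conjugacy class } \mathfrak{c} \text{ of } \pi_1(G_1).
\end{equation*}
To establish it, I would start from a tight representative $\gamma'$ of $\phi_*(\mathfrak{c})$, so that $\psi(\gamma')$ represents $\mathfrak{c}$ in $G_1$. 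An edge of $G_i$ can appear in $\psi(\gamma')$ only as part of $\psi(e')$ for some edge $e'$ of $\gamma'$ with $\psi(e') \cap G_i \ne \emptyset$, which by the choice of $K_i$ forces $e'$ to be an edge of $K_i$. Counting contributions gives $\ell_i(\psi(\gamma')) \le M_i \cdot \ell_{K_i}(\gamma') \le M_i \cdot \ell'_j(\gamma')$. Since tightening a loop proceeds by successively removing cancelling pairs $ee^{-1}$, it cannot increase the number of edges in any fixed subgraph, so the same bound descends to the tight representative of $\mathfrak{c}$.

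Applying the comparison to $f_{1*}^n(\mathfrak{c})$ and using $\phi_*(f_{1*}^n(\mathfrak{c})) = f_{2*}^n(\phi_*(\mathfrak{c}))$, the constant $M_i$ washes out upon taking logarithms, dividing by $n$, and passing to the $\limsup$. Taking the supremum over $\mathfrak{c}$ (equivalently, via $\phi_*$, over conjugacy classes of $\pi_1(G_2)$) yields $h(\pi_1(G_1), \ell_i) \le h(\pi_1(G_2), \ell'_{j(i)})$, and letting $i \to \infty$ (so that $j(i) \to \infty$) gives $h(\pi_1(G_1), f_1, \{\ell_i\}) \le h(\pi_1(G_2), f_2, \{\ell'_j\})$. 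The reverse inequality follows by swapping the roles of $(G_1, f_1, \phi)$ and $(G_2, f_2, \psi)$. The main obstacle is the length comparison above: it relies crucially on properness of $\psi$ to produce the compact $K_i$ controlling where $\psi$-preimages of $G_i$ can live, together with the monotonicity of tightening in the edge count of any fixed subgraph so that the bound persists after passage to tight representatives.
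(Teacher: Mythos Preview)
Your argument is correct and is essentially a fleshed-out version of the paper's one-line proof. The paper simply asserts that the pushforward sequence $\{h_*\ell_i\}$ and $\{\ell'_j\}$, viewed as functions on conjugacy classes of $\pi_1(G_2)$, are interleaving in the sense of Remark~\ref{rmk_2.11}; your length comparison $\ell_i(\mathfrak{c}) \le M_i\,\ell'_{j(i)}(\phi_*(\mathfrak{c}))$ (and its symmetric counterpart) is precisely what makes that interleaving claim true, and your use of properness of $\psi$ to produce the finite $K_i$ and the constant $M_i$ is exactly the missing mechanism.
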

\begin{proof}
Let $h \colon G_1 \to G_2$ be the homotopy equivalence. The key observation is that the sequence $\{\ell_i'\}$ and the pushforward sequence $\{h_*\ell_i\}$, {\color{black} viewed as functions on the conjugacy classes of $\pi_1(G_2)$,} are interleaving as defined in Remark \ref{rmk_2.11}. Therefore, $h(\pi_1(G_1),f_1,\{\ell_i\}) = h(\pi_1(G_2),f_2,\{\ell_i'\})$ by Remark \ref{rmk_2.11}.
\end{proof}

\begin{lemma} \label{lem_6.3}
Let $f_1 \colon G_1 \to G_1$ and $f_2 \colon G_2 \to G_2$ be generalized endperiodic relative train track maps that are proper homotopy equivalent to each other. Then $\lambda(f_1) = 0$ if and only if $\lambda(f_2) = 0$.
\end{lemma}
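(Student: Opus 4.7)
The plan is to route through the fundamental group entropy $h(\pi_1(G),f,\{\ell_i\})$ (with $\{\ell_i\}$ as in Definition \ref{l_idef}), which Lemma \ref{lem_6.1} has already shown to be a proper homotopy invariant of generalized endperiodic maps. The key step is to establish the sharp characterization
\[
\lambda(f) = 0 \;\iff\; h(\pi_1(G),f,\{\ell_i\}) = -\infty \qquad (\star)
\]
for any generalized endperiodic relative train track map $f \colon G \to G$. Granting $(\star)$, applying Lemma \ref{lem_6.1} immediately yields the chain $\lambda(f_1) = 0 \iff h(\pi_1(G_1),f_1,\{\ell_i\}) = -\infty \iff h(\pi_1(G_2),f_2,\{\ell_i'\}) = -\infty \iff \lambda(f_2) = 0$, proving Lemma \ref{lem_6.3}.

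The structural fact underlying $(\star)$ is that each transition matrix $M_i$ has non-negative integer entries, so its spectral radius $\lambda_i$ lies in $\{0\} \cup [1,\infty)$: if $M_i$ is not nilpotent, the associated directed graph contains a cycle of some length $k$, whence $\mathrm{tr}(M_i^{nk}) \geq 1$ for every $n \geq 1$; comparing this with $|\mathrm{tr}(M_i^{nk})| \leq \dim(M_i)\cdot \lambda_i^{nk}$ forces $\lambda_i \geq 1$. Consequently $\lambda(f) \in \{0\} \cup [1,\infty)$, so the awkward interval $(0,1)$, where neither Theorem \ref{thm_1.2}(1) nor Lemma \ref{lem_2.13} directly applies, is in fact vacuous.

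Given this, the forward direction of $(\star)$ follows from Theorem \ref{thm_1.2}(1): if $\lambda(f) = 0$, then for every conjugacy class $\mathfrak{c}$ of $\pi_1(G)$ and every index $i$, the tight representative of $f^n(\mathfrak{c})$ is eventually disjoint from $G_i$, so $\ell_i(f^n(\mathfrak{c})) = 0$ for $n$ sufficiently large. Hence $h(\pi_1(G),\ell_i) = -\infty$ for every $i$, and thus $h(\pi_1(G),f,\{\ell_i\}) = -\infty$. For the reverse direction I argue by contrapositive: if $\lambda(f) > 0$, the structural fact above promotes this to $\lambda(f) \geq 1$, and Lemma \ref{lem_2.13} then gives $h(\pi_1(G),f,\{\ell_i\}) = \log \lambda(f) \geq 0 > -\infty$.

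The main obstacle is noticing that one must exclude the interval $(0,1)$ from the possible values of $\lambda(f)$; without this observation, the forward and reverse arguments leave a gap precisely where the two available tools (Theorem \ref{thm_1.2}(1) and Lemma \ref{lem_2.13}) fail to overlap. Once the integrality of transition matrices is invoked to close this gap, the remainder of the proof is a direct combination of Theorem \ref{thm_1.2}(1), Lemma \ref{lem_2.13}, and Lemma \ref{lem_6.1}.
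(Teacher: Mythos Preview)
Your proof is correct and takes a genuinely different route from the paper's. The paper instead isolates a qualitative property $(\ast)$---for every conjugacy class $\mathfrak{c}$ and every compact subgraph $G'$, the image $f^n(\mathfrak{c})$ eventually becomes trivial in $\pi_1$ of the quotient $G/(G\setminus G')$---and argues directly that $\lambda(f)=0 \iff (\ast)$, then asserts that $(\ast)$ is manifestly a proper homotopy invariant. Your approach instead routes through the fundamental group entropy and reuses the already-proven Lemmas~\ref{lem_2.13} and~\ref{lem_6.1}, so you avoid introducing a new invariant and the (admittedly easy) check that $(\ast)$ is preserved under proper homotopy equivalence. The price you pay is needing the structural observation $\lambda(f)\in\{0\}\cup[1,\infty)$ to bridge the hypotheses of Theorem~\ref{thm_1.2}(1) and Lemma~\ref{lem_2.13}; the paper's argument implicitly uses the same fact (its reverse implication relies on a stratum with positive spectral radius producing a loop whose length does not vanish), but does not isolate it. Both arguments are short; yours is perhaps a bit more modular since it feeds directly into the proof of Theorem~\ref{thm_1.4} via the same entropy invariant.
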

\begin{proof}
We show that $\lambda(f)=0$ if and only if (*)for any conjugacy $\mathfrak{c} \in \pi_1(G)$, any compact subgraph $G'$ of $G$, there exists $N \in \mathbb{N}$ such that for any $n \ge N$, we have
$f^n(c)$ equals the identity element in the fundamental group $\pi_1(G_1)$ where $G_1$ is the finite graph obtained by collapsing all edges in $G\setminus G'$. Since (*) is invariant under proper homotopy equivalence, the lemma follows.

If $\lambda(f) = 0$, then (*) follows from Theorem \ref{thm_1.2} (1). Conversely, suppose $\lambda(f) \neq 0$. Then let $G'$ be the strata with $\lambda(f)$, $\mathfrak{c}$ be a conjugacy class in $G'$. Then for any $N\in \mathbb{N}$, there exists $n \ge N$ such that $f^n(\mathfrak{c})$ not equal to identity as the growth rate of $\mathfrak{c}$ equals $\lambda(f)$.
\end{proof}

Now we give a proof of Theorem \ref{thm_1.4}.
\begin{proof}[Proof of Theorem \ref{thm_1.4}]
Let $f_1 \colon G_1 \to G_1$ and $f_2 \colon G_2 \to G_2$ be generalized endperiodic maps that are proper homotopic to each other. After applying Theorem \ref{thm_traintrack}, denote by $F_1 \colon G'_1 \to G'_1$ and $F_2 \colon G'_2 \to G'_2$ the relative train track representations of $f_1$ and $f_2$ respectively.
We note that $F_1$ and $F_2$ are proper homotopy equivalent to each other since $F_i$ is proper (or combinatorially bounded) homotopy equivalent to $f_i,i=1,2$ and $f_1$ is proper homotopy equivalent to $f_2$.

If $\lambda(F_1)=0$, then $\lambda(F_2) = 0$ by Lemma \ref{lem_6.3}.
Suppose that $\lambda(F_1)\ge 1$. Again by Lemma \ref{lem_6.3}, we also have $\lambda(F_2) \ge 1$.
By Lemma \ref{lem_6.1}, we have $h(\pi_1(G_1'),F_1,\{\ell_i\}) = h(\pi_1(G_2'),F_2,\{\ell'_i\})$.
On the other hand, by Lemma \ref{lem_2.13}, we have
$h(\pi_1(G_1),F_1,\{\ell_i\}) = \log\lambda(F_1)$ and $h(\pi_1(G_2),F_2,\{\ell_i'\}) = \log\lambda(F_2)$.

Therefore, $\lambda(F_1) = \lambda(F_2)$ in both cases. 
By Lemma \ref{lem_2.13}, we have $\lambda(F_1) = h_{\rm top}(F_1)$.
The last sentence of the theorem follows from Lemmas \ref{lem_2.13}, \ref{lem_6.1} and \ref{lem_2.130} if $\lambda(F_1)\ge 1$; the last sentence is trivial if $\lambda(F_1) = 0$. This completes the proof.
\end{proof}

\section{Further Questions}\label{sec_questions}
Comparing the example in Section \ref{ex2} and \cite[Example 4.13]{CCF21}, we see that the relative train track map we obtained is not necessarily the same as the train tracks obtained via the Handle-Miller theory developed in \cite{CCF21}. Even after doing further folds to turn it into train track, it remains different from the one in \cite{CCF21}.
\begin{question}
What is the relationship between the relative train track maps or train track maps we constructed and the ones given by the Handle-Miller theory?
\end{question}

\begin{question}
Can our Bestvina-Handel algorithm be extended to larger families of infinite graph maps? For instance those that are not necessarily attracting or repelling near the ends, but commutes with an endperiodic map near each end.
\end{question}

\bibliographystyle{siam}
\bibliography{refs}
\end{document}